\xpatchcmd{\MaketitleBox}{\hrule}{}{}{}
\xpatchcmd{\MaketitleBox}{\hrule}{}{}{}
\g@addto@macro\normalsize{%
	\setlength\abovedisplayskip{4pt}
	\setlength\belowdisplayskip{4pt}
	\setlength\abovedisplayshortskip{4pt}
	\setlength\belowdisplayshortskip{4pt}
}
\numberwithin{equation}{section}
\crefname{section}{Section}{Sections}
\crefname{subsection}{Subsection}{Subsections}
\crefname{condition}{Condition}{Conditions}
\crefname{hypothesis}{Hypothesis}{Conditions}
\crefname{assumption}{Assumption}{Assumptions}
\crefname{lemma}{Lemma}{Lemmas}
\crefname{definition}{Definition}{Definitions}
\crefname{figure}{figure}{figures}
\crefname{question}{Question}{Questions}
\numberwithin{equation}{section}
\newtheorem{theorem} {Theorem}[section]
\newtheorem{proposition} {Proposition}[section]
\newtheorem{lemma}{Lemma}[section]
\newtheorem{corollary}{Corollary}[section]
\newtheorem{counter-example}{counter-example}[section]
\newtheorem{remark} {Remark}[section]
\newtheorem{definition} {Definition}[section]
\def\N{\mathbb{N}}
\def\CC{{\rm \kern.24em \vrule width.02em height1.4ex depth-.05ex \kern-.26emC}}
\def\TagOnRight
\def\AA{{it I} \hskip-3pt{\tt A}}
\def\QQ{\rlap {\raise 0.4ex \hbox{$\scriptscriptstyle |$}} {\hskip -0.1em Q}}
\newcommand{\vo}{\vec{o}\@ifnextchar{^}{\,}{}}
\def\YYint#1#2#3{{\setbox0=\hbox{$#1{#2#3}{\iint}$}
		\vcenter{\hbox{$#2#3$}}\kern-.50\wd0}}
\def\XXint#1#2#3{{\setbox0=\hbox{$#1{#2#3}{\int}$}
		\vcenter{\hbox{$#2#3$}}\kern-.50\wd0}}
\def\namedlabel#1#2{\begingroup
	\def\@currentlabel{#2}%
	\label{#1}\endgroup
}
\newcommand{\rmh}[1]{\mathpalette{\raisem@th{#1}}}
\newcommand{\raisem@th}[3]{\hspace*{-1pt}\raisebox{#1}{$#2#3$}}
\newcommand{\descitem}[2]{\item[#1] \label{#2}}
\newcommand{\descref}[2]{\hyperref[#1]{\textnormal{\textcolor{black}{}\textcolor{blue}{\bf #2}\textcolor{black}{}}}}
\newcommand{\dref}[2]{\hyperref[#1]{\textcolor{black}{(}\textcolor{blue}{\bf #2}\textcolor{black}{)}}}
\newcommand{\be} {\begin{eqnarray}}
	\newcommand{\ee} {\end{eqnarray}}
\newcommand{\Bea} {\begin{eqnarray*}}
	\newcommand{\Eea} {\end{eqnarray*}}
\newcommand{\rr}{\rightarrow}
\newcommand{\de} {\delta}
\newcommand{\p}  {\prime}
\newcommand{\la} {\lambda}
\newcommand{\si} {\sigma}
\newcommand{\f}{\infty}
\newcommand{\R}{\mathbb{R}}
\newcommand{\al}{\alpha}
\newcommand{\ga}{\gamma}
\newcommand{\tcb}{\textcolor{blue}}
\newcommand{\norm}[1]{\left|\hspace{-0.2mm}\left| #1 \right|\hspace{-0.2mm}\right|}
\newcommand{\abs}[1]{\left| #1\right|}
\newcommand{\RN}[1]{%
	\textup{\uppercase\expandafter{\romannumeral#1}}%
}
\newcounter{whitney}
\newcounter{ineqcounter}
\def\ps@pprintTitle{%
	\let\@oddhead\@empty
	\let\@evenhead\@empty
	\def\@oddfoot{}%
	\let\@evenfoot\@oddfoot}
\def\@mkboth#1#2{}
\newlength\appendixwidth
\preto\appendix{\addtocontents{toc}{\protect\patchl@section}}
\newcommand{\patchl@section}{%
	\settowidth{\appendixwidth}{\textbf{Appendix }}%
	\addtolength{\appendixwidth}{1.5em}%
	\patchcmd{\l@section}{1.5em}{\appendixwidth}{}{\ddt}%
}
\begin{document}
	\begin{frontmatter}
		\title{Higher regularity for  entropy solutions of conservation laws with geometrically constrained discontinuous flux}
		
		\author[myaddress]{Shyam Sundar Ghoshal}
		\ead{ghoshal@tifrbng.res.in}

		\author[myaddress1]{St\'ephane Junca}
		\ead{stephane.junca@univ-cotedazur.fr}

		\author[myaddress]{Akash Parmar}
		\ead{akash@tifrbng.res.in}
		\address[myaddress]{Centre for Applicable Mathematics, Tata Institute of Fundamental Research, Post Bag No 6503, Sharadanagar, Bangalore - 560065, India.}
		
		\address[myaddress1]{Universit\'e C\^ote d'Azur,  LJAD,  Inria \& CNRS,
			Parc Valrose,
			06108 Nice, 
			France.}

		
		\begin{abstract}
            For the Burgers equation, the entropy solution  becomes  instantly $BV$ with only $L^\infty$ initial data.  
            For conservation laws with genuinely nonlinear  discontinuous flux, it is well known that the $BV$ regularity of entropy solutions is lost.  Recently, this regularity has been proved to be fractional with $s= 1/2$. Moreover, for less nonlinear flux the solution has still a fractional regularity $ 0<s  \leq 1/2$.  The resulting general rule is  the regularity of entropy solutions for a discontinuous flux is less than for a smooth flux.   
            In this paper, an optimal  geometric condition on the discontinuous flux is used  to recover the same regularity as for the smooth flux with the same kind of non-linearity.    
		\end{abstract}

		\begin{keyword}
			Conservation laws\sep Interface \sep Discontinuous flux\sep Cauchy problem\sep Regularity \sep $BV$ functions  \sep Fractional $BV$ spaces.
			\MSC[2020] 35B65\sep35L65  \sep 35F25 \sep 35L67  \sep 26A45 \sep35B44.
		\end{keyword}
		\end{frontmatter}
	\tableofcontents
	\section{Introduction}
Scalar conservation laws with discontinuous flux arise in traffic flow modeling of vehicular traffic on highways and use to predict congestion and control them (\cite{LWR, traffic}). Another example is the continuous sedimentation of solid particles in the liquid (\cite{diehl,diehl4}). It occurs in the model of two-phase flow in the porous media which is significant to many scientific and industrial explorations such as petroleum engineering and many others \cite{petro}. With this brief discussion of the applications of scalar conservation laws, we can continue to discuss some literature. For more details, one can see \cite{burger2, burger4, ion}.

The well-posedness of the scalar conservation laws with discontinuous flux has been studied extensively over the last few decades \cite{Kyoto, boris2, BGG, STG, P09}. The existence of the solution given by the several numerical schemes \cite{AJG, boris1, burger1, SAT, G_JHDE, tower}. However, the regularity of the entropy solution has not been studied. In general, 
for conservation laws with discontinuous flux, the entropy solutions are not regular, not in $BV$ even if the initial data is in $BV$ \cite{AS}.  The discontinuity of the flux usually forbids the decay of the total variation. Even for constant initial data, the entropy solution is not constant. 

However, a  fractional regularity occurs \cite{fractional}. 
The fractional regularity discovered has been proven to be optimal in many cases. 
The maximal regularity belongs to some fractional $BV$ spaces $BV^s$.   For the general 
genuinely nonlinear case it is  at most  $BV^{1/2}$.

Fractional $BV$ spaces  have been  introduced \cite{junca1} to precisely quantify the regularity of entropy solutions with shocks that do not belong to $BV$ \cite{CJ1, Cheng83}. Such regularity is optimal  \cite{SAJJ}, for smooth nonlinear flux  as for the Burgers equation and less nonlinear flux. In  the recent work \cite{fractional} the fractional $BV$ spaces are  useful to apprehend with precision  the regularity of entropy solutions with  discontinuous  convex flux.   

In this paper, the unexpected regularity of entropy solutions has been given for a special  family of discontinuous flux.
This family has been highlighted in \cite{S}. It provides solutions as regular as continuous flux.   If the initial data belongs to $BV$  the entropy solutions keep this $BV$ regularity. Moreover, with the strongest non-linearity and $L^\infty$ initial data, the solutions become $BV$ as for the Burgers equation. In this article, the same family is considered with more general non-linearity. In this case, the $BV^s$ framework  is needed. Propagation of $BV^s$ regularity or smoothing effect in these spaces is similar to continuous flux although the fractional total variation is not decreasing.

We proceed to describe the precise form of the discontinuous flux in detail. The scalar conservation law analyzed in this paper focuses on a discontinuous flux that consists of one interface. The specific form of the equation is as follows:
	\begin{eqnarray}\label{1.1}
		\left\{\begin{array}{rlll}
			u_{t}+f(u)_{x}&=0, &\mbox{ if }& x>0,  t>0,\\
			u_{t}+g(u)_{x}&=0,&\mbox{ if }& x<0,  t>0,\\
			u(x,0)&=u_{0}(x),  &\mbox{ if }& x\in\R,
		\end{array}\right.
	\end{eqnarray}
	where $u:\R\times[0,\f)\rr\R$ is unknown, $u_0(x)\in L^{\f}(\R)$ is the initial data and $f$, $g$ are the fluxes. 
 The family of fluxes studied satisfies the  following assumptions:
 \begin{description}
    \descitem{A1.}{a1} Fluxes $f$ and $g$ belongs to $C^{1}(\R,\R)$  and are strictly convex. 
    \descitem{A2.}{a2} Compatible fluxes: $\min f= \min g$,  i.e., $f(\theta_{f})=g(\theta_{g})$, where $\theta_{f}$, $\theta_{g}$ are the critical points of $f, g$ respectively.
    \descitem{A3.}{a3} Non-degeneracy condition, there exist two numbers $p\ge1, q\ge1$ such that, for any compact set $K$, there exist positive numbers $C_{1}, C_{2}$ for all $u\not=v$, and $u, v\in K$,
    \begin{eqnarray}\label{fluxc}
		\frac{|f'(u)-f'(v)|}{|u-v|^{p}}>C_{1} >0  &\mbox{ and }&\frac{|g'(u)-g'(v)|}{|u-v|^{q}}>C_{2}>0 .
	\end{eqnarray} 
 \end{description}

 \begin{center}
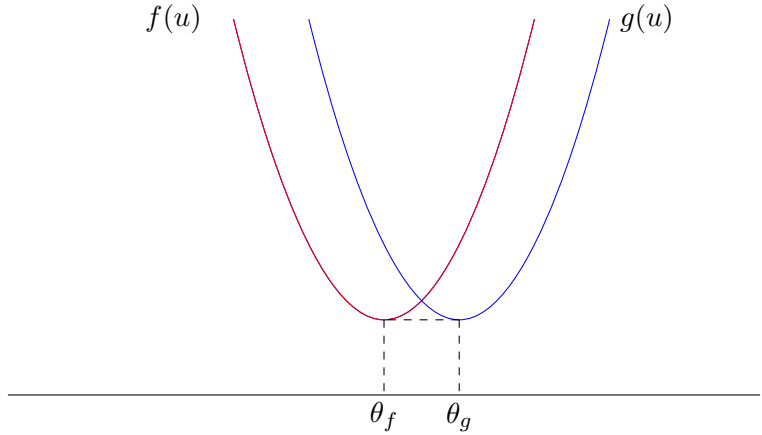

     \begin{tikzpicture}
         \draw (-5,-1)--(5,-1);
         \draw[blue] (0,0) parabola (2,4);
         \draw[blue] (0,0) parabola (-2,4);
         \draw[red] (0,0) parabola (2,4);
         \draw[red] (0,0) parabola (-2,4);
         \draw[blue] (1,0) parabola (3,4);
         \draw[blue] (1,0) parabola (-1,4);
         \draw[dashed] (1,0)--(0,0);
         \draw[dashed] (1,0)--(1,-1);
         \draw[dashed] (0,0)--(0,-1);
         \draw (-2.8,4) node{\small $f(u)$};
         \draw (3.5,4) node{\small $g(u)$};
         \draw (0,-1.29) node{\small $\theta_{f}$};
         \draw (1,-1.29) node{\small $\theta_{g}$};
     \end{tikzpicture}
     \captionof{figure}{An illustration of two compatible fluxes $f$ and $g$.}
 \end{center}
 The assumption \descref{a2}{(A2)} is fundamental,  else loss of regularity immediately occurs near the interface. 
Examples with less regularity have been built in \cite{fractional}. That means that geometric assumption  \descref{a2}{(A2)}  is optimal to keep the same regularity as for the case with smooth flux. 

The nonlinear assumption \descref{a3}{(A3)} implies strict convexity and is the necessary condition to have a smoothing effect with at least a  fractional derivative  for only $L^\infty$ data \cite{CJ1, CJJ,  SAJJ, JaX}. Without this assumption but still, with a nonlinear convex flux, the regularity has to be quantified in bigger generalized $BV$ spaces \cite{CJLO, GJC}.
\medskip 

It is well-known about scalar conservation laws that it does not admit the classical solution for all time $t$. Thus one needs the following notion of a weak solution. 
 \begin{definition}[{\bf Weak solution}]\label{weak}  
		A function $u\in C(0,T;L^{1}_{loc}(\R))$ is said to be a weak solution of the problem \eqref{1.1}  if
		\begin{equation*}
			\int\limits_{0}^{\f}\int\limits_{\R} u\frac{\partial \phi}{\partial t}+F(x,u)\frac{\partial \phi}{\partial x}\mathrm{d}x \ \mathrm{d}t +\int\limits_{\R}^{} u_{0}(x)\phi(x,0)\mathrm{d}x =0,
		\end{equation*}
		for all $\phi\in D(\R\times\R^+)$, where  the flux  $F(x,u)$ is given as $F(x, u)=H(x)f(u)+(1-H(x))g(u)$, and $H(x)$ is Heaviside function.
  \end{definition}
  By using the definition of the weak solution one can derive the condition called  the Rankine-Hugoniot condition, at $x=0$, $u$ satisfies the following for almost all $t$,
		\begin{equation}\label{RH}
			f(u^{+}(t))=g(u^{-}(t)),
		\end{equation} 
where $u^{+}(t)=\lim\limits_{x\rr0+}u(x,t)$ and $u^{-}(t)=\lim\limits_{x\rr0-}u(x,t)$.
  The existence of traces like a $BV$ function is an important usual feature for weak solutions of conservation laws \cite{P09}. The left and right traces $u^{-}$ and $u^{+}$ are crucial for the wellposedness as well as the regularity of the weak solution. The existence of the interface traces has been established in \cite{Kyoto} by using the Hamilton-Jacobi type equation. 

  Due to the fact that the weak solutions are not unique one needs to take into account the ``entropy condition" to get the uniqueness of the weak solution. For $f=g$, Kru\v{z}kov \cite{Kruzkov} gave a generalized entropy condition. However, due to the discontinuity of flux at the interface the Kru\v{z}kov entropy condition is insufficient for each side of the interface. Therefore, an additional "interface entropy condition" is required, which was introduced in \cite{Kyoto}.
  \begin{definition}[{\bf Entropy solution} \cite{Kyoto}]
		A weak solution $u$  of the problem \eqref{1.1} is said to be an entropy solution     if   it satisfies Kruzkov entropy solutions on each side of the interface $x=0$ and if  the    following ``interface entropy condition"  is fulfilled for almost all $t>0$, 
		\begin{equation} \label{iec}
			|\{t: f'(u^{+}(t))>0> g'(u^{-}(t))\}|=0,
		\end{equation}
	\end{definition}
Let us give a summary of this paper. The main results are given and commented on in the next Section \ref{main}. Then, important tools are recalled in Section \ref{def} before proving the main results. The proof of the main results is given in Section \ref{mainproof} along with many useful lemmas. 

\section{Main Results}\label{main}
As we have mentioned  the fluxes are convex and precisely satisfy the assumptions \descref{a1}{(A1)}, \descref{a2}{(A2)} and \descref{a3}{(A3)}. Let us denote the $g_{-}^{-1}$, $f_{+}^{-1}$ denote the inverse of $g, f$ for domain where $g'(u)\le0$ and $f'(u)\ge 0$ respectively.  Notice that the existence of a minimum for $f$ and $g$ are always assumed in this paper as it allows the critical behavior of the entropy solution. If we are assuming that $f$ and $g$  have no minimum it implies that the fluxes are strictly increasing or decreasing which is handled in \cite{AS}. In this case, when the minimum of the two fluxes coincide, $f(\theta_{f})=g(\theta_{g})$, it is similar to the case $f=g$ where there is no interface, except that the proof and the estimates are more complicated and need the theory of conservation laws with  discontinuous flux. In particular, if $f$ and $g$ are  uniformly convex then a $BV$ smoothing effect occurs. The fractional $BV$ spaces $BV^s, 0 < s \le 1$ are recalled in the next section. For $s=1$, $BV^1$ is $BV$.  
\begin{theorem}[{\bf Smoothing effect for compatible fluxes with $L^\infty$ initial data}]\label{linf}
		Let $f$ and $g$ be convex fluxes with the same minimum and satisfy the non-degeneracy condition \eqref{fluxc}, i.e., condition \descref{a3}{(A3)} along with assumptions \descref{a1}{(A1)} and \descref{a2}{(A2)}. Let $u(\cdot, t)$ be  the entropy solution of \eqref{1.1} corresponding to an initial data $u_{0}\in L^{\f}(\R)$. Then $u(t,\cdot)\in BV^s_{loc}(\R)$ for $t>0$ where $s=\min\{1/p,1/q\}$. 
	\end{theorem}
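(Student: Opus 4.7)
My plan is to reduce the problem to two one-sided problems with smooth convex flux, coupled only through the interface traces $u^{\pm}(t)$, and then to transfer the classical fractional $BV$ smoothing effect from each half-line to the whole line. First, I would exploit that the traces $u^{\pm}(t)$ exist for a.e.\ $t>0$ (as the authors note, via \cite{Kyoto}), are bounded by $\|u_0\|_{L^\infty}$, and satisfy the Rankine--Hugoniot relation \eqref{RH} together with the interface entropy condition \eqref{iec}. The role of the compatibility assumption \descref{a2}{(A2)} is to ensure these interface relations do not force extra variation at $x=0$.

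The heart of the argument is a one-sided Oleinik-type inequality on each half-line, uniform up to the interface:
\begin{align*}
f'(u(x_2,t)) - f'(u(x_1,t)) &\le \frac{x_2 - x_1}{t}, \quad 0 < x_1 < x_2,\\
g'(u(x_2,t)) - g'(u(x_1,t)) &\le \frac{x_2 - x_1}{t}, \quad x_1 < x_2 < 0.
\end{align*}
I would establish this via a Lax--Oleinik type variational formula adapted to the two-flux problem: a generalized backward characteristic starting at $(x,t)$ with $x>0$ either traces back to the initial data on $x>0$ (the classical situation, for which the bound is the standard Oleinik inequality), or crosses the interface at an intermediate time and continues on $x<0$ under the $g$-Hamiltonian. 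Thanks to \descref{a2}{(A2)}, the Legendre transforms $f^*$ and $g^*$ attain the same value at the critical slope, so the action cost of an interface crossing is the same whether viewed from the $f$-side or the $g$-side, and the minimizer remains well-defined. The interface entropy condition \eqref{iec} then selects the correct branch at $x=0$, preserving the monotonicity of the backward characteristic and hence the one-sided inequality on each half.

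Once these estimates are in hand, the non-degeneracy \eqref{fluxc} converts linear control on $f'\circ u$ (resp.\ $g'\circ u$) into a $p$-th (resp.\ $q$-th) power bound on the jumps of $u$; a standard argument going back to \cite{junca1, CJ1} yields $u(t,\cdot) \in BV^{1/p}_{\loc}(0,\infty)$ and $u(t,\cdot)\in BV^{1/q}_{\loc}(-\infty,0)$ for every $t>0$. Since $u$ is bounded and $s=\min\{1/p,1/q\}\le 1/p,1/q$, a H\"older-type interpolation gives the embeddings $BV^{1/p}\hookrightarrow BV^s$ and $BV^{1/q}\hookrightarrow BV^s$ on bounded intervals. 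The single interface jump contributes at most $|u^+(t)-u^-(t)|^{1/s} \le (2\|u_0\|_{L^\infty})^{1/s}$ to the $s$-variation at $x=0$, and summing the three contributions gives $u(t,\cdot)\in BV^s_{\loc}(\R)$.

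The main obstacle is the uniform one-sided Oleinik estimate across the interface. In the generic discontinuous flux setting the interface injects a time-dependent drift at $x=0$ that breaks the linear-in-$x/t$ bound and causes the well-known loss of regularity described in \cite{AS, fractional}. Proving that compatibility is exactly sufficient to suppress this drift, at the level of merely $L^\infty$ data where characteristics are not defined pointwise, is the delicate part; I would expect to carry it out by regularising the initial data to $BV$ (where the result of \cite{S} already gives $BV$ propagation), applying the Oleinik bound along the regularisation, and passing to the $L^\infty$ limit using the $L^1$ stability of entropy solutions established in the discontinuous flux literature.
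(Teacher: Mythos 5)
Your reduction of the problem to the half-lines $x>R_1(t)$ and $x<L_1(t)$ plus a single interface jump is fine and matches the treatment of terms \RN{1} and \RN{3} in the paper, but the central claimed estimate --- the one-sided Oleinik inequality $f'(u(x_2,t))-f'(u(x_1,t))\le (x_2-x_1)/t$ uniformly up to the interface --- is not justified and is, as stated, false in the transmitted region $0<x<R_1(t)$. There the explicit formula of Theorem \ref{constant} gives $u(x,t)=(f')^{-1}\bigl(x/(t-t_+(x,t))\bigr)$: these waves emanate from the interface at times $t_+(x,t)\in(0,t)$, so the natural Oleinik constant is $1/(t-t_+(x,t))$, which degenerates as $x\to 0^+$ since $t_+(x,t)\to t$. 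Concretely, a drop in the transmitted flux $g(u^-(\tau))$ at a time $\tau_0$ close to $t$ produces a rarefaction fan centred at $(0,\tau_0)$ in which $f'(u(\cdot,t))$ increases with slope $1/(t-\tau_0)\gg 1/t$. Whether and how fast $g(u^-(\tau))$ can decrease is governed by the \emph{left} problem and must be controlled by tracing back through the interface; this is exactly where the paper works, splitting $(0,R_1(t))$ at a point $x'$ with $t_+(x',t)\approx t/2$, using the representation $u=f_+^{-1}g\bigl((g')^{-1}(-z_+(x,t)/t_+(x,t))\bigr)$ on $(0,x')$, the lower bound \eqref{lb:g-inv} keeping the argument away from $\theta_g$, and the H\"older/Lipschitz regularity of the singular map from Lemma \ref{lipschitz} (which is precisely where \descref{a2}{(A2)} enters). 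Your ``same action cost of crossing'' remark is the right intuition for why (A2) makes the singular map well behaved, but it does not by itself yield the quantitative estimate.

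Two further points. First, even where an Oleinik-type bound can be salvaged, your intermediate claim $u(t,\cdot)\in BV^{1/p}_{\loc}(0,\infty)$ is wrong when $q>p$: the values on $(0,R_1(t))$ are images under $f_+^{-1}g$ of left-side data, so their regularity is dictated by the exponent $q$ of $g$, not by $p$; the correct local exponent there is $1/q$, which is why the final statement only asserts $s=\min\{1/p,1/q\}$. Second, the proposed fallback --- regularise $u_0$ to $BV$, invoke \cite{S}, and pass to the $L^1$ limit --- does not close the gap: \cite{S} gives propagation of $BV$ bounds depending on $TV(u_0)$, not an Oleinik or $BV^s$ estimate uniform along the regularising sequence, and without such uniformity nothing survives the limit. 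You would need to reprove the degenerate-Oleinik control across the interface for $L^\infty$ data directly, which is in substance the computation the paper carries out for term \RN{2}.
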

	\begin{remark}
		Note that the exponent $s=\min\{1/p,1/q\}$ in Theorem \ref{linf} is optimal due to the counter-examples in \cite{finer,CJ1,SAJJ}. In particular, for uniform convex and compatible fluxes $f$ and $g$,  there is a smoothing effect in $BV$ as for the case $f=g$ \cite{lax1,ol}.
	\end{remark}
	Once we know about the regularity of entropy solution in Theorem \ref{linf}, it is natural to see whether $TV^s(u(t,\cdot))$ satisfies an explicit estimate in terms of time variable $t$. In the next proposition, we provide a sufficient condition to obtain a $1/t^\ga$ estimate for an appropriate choice of $\ga$.
		\begin{proposition}[{\bf Explicit $BV^s$ estimates}]\label{prop:estimate}
			Let $f$ and $g$ be two $C^2$ fluxes such that $f(\theta_{f})=g(\theta_{g})$ and satisfying the non-degeneracy condition \eqref{fluxc} with exponents $p,q$ respectively. Let $u(\cdot, t)$ be  the entropy solution of \eqref{1.1} corresponding to an initial data $u_{0}\in L^{\f}(\R)$. We assume that there exist $\de,r>0$ such that 
			\begin{equation}\label{condition:estimate}
				\operatorname*{ess-sup}\limits_{(-\de,0)}u_0>\theta_g+r\mbox{ and  }\operatorname*{ess-sup}\limits_{(0,\de)}u_0<\theta_f-r,
			\end{equation}
			then we have for all $M>0$, 
			\begin{equation}\label{estimate-precise}
				TV^{s}(u(\cdot,t), [-M, M])\le \frac{C(f,g,M)}{\min\{t^{\frac{1}{qs}},t^{\frac{1}{ps}}\}}\left[\max\{1,t/\de,1/r\}\right]^{\frac{pqs+1}{s^2}}+2(2||u_{0}||_{\f})^{1/s}
				\mbox{ for }t>0,
			\end{equation}
			where $s=\min\{1/p,1/q\}$ and $C$ is depending only on $f,g,M$.
		\end{proposition}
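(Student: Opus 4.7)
The plan is to establish an Oleinik-type one-sided Lipschitz bound on each half-line separately and convert it into a $BV^s$ estimate through the non-degeneracy \eqref{fluxc}, while using \eqref{condition:estimate} to control the interface trace with explicit constants in $\delta$ and $r$. Without the interface, the classical Oleinik inequality for a smooth convex flux gives $f'(u(x_2,t)) - f'(u(x_1,t)) \leq (x_2-x_1)/t$ for $0 < x_1 < x_2$, and \eqref{fluxc} converts this into a $BV^{1/p}$ bound on $(0,\infty)$ (respectively $BV^{1/q}$ on $(-\infty,0)$). The technical core is to preserve this estimate up to $x = 0^\pm$ with quantitative dependence on $\delta$ and $r$.

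On $[0, M]$ I would split into $[0, x_*(t)] \cup [x_*(t), M]$, where $x_*(t)$ is chosen so that the backward characteristics from $[x_*(t), M]$ at time $t$ reach $t = 0$ on the half-line $(0,\infty)$ without first hitting the interface. The hypothesis that $u_0 \leq \theta_f - r$ a.e.\ on $(0, \delta)$ combined with \eqref{fluxc} yields $f'(u_0) \leq f'(\theta_f - r) \leq -C_1 r^p$, so initially characteristics propagate leftward at speed at least $C_1 r^p$; tracking the cone of influence gives an explicit $x_*(t)$ of order $t$ with constant depending on $r$ and $\|u_0\|_\infty$. On $[x_*(t), M]$, the standard Oleinik estimate combined with \eqref{fluxc} yields
\[
\sum_i |u(x_{i+1},t) - u(x_i,t)|^p \leq C(M + \|u_0\|_\infty t)/t
\]
for any partition, and interpolating via $|u-v|^{1/s} \leq (2\|u_0\|_\infty)^{1/s - p}|u-v|^p$ converts this into the $t^{-1/(ps)}$ decay for $TV^s$ on $[x_*(t), M]$. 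On $[0, x_*(t)]$, the crude oscillation bound contributes at most $(2\|u_0\|_\infty)^{1/s}$. A symmetric argument on $[-M, 0]$ with flux $g$, exponent $q$, and the condition at $(-\delta, 0)$ handles the left half-line, and the final estimate is the sum.

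The main obstacle is keeping the Oleinik bound valid up to the interface for all $t > 0$, not only for $t$ comparable to $\delta/(C_1 r^p)$. For short times, \eqref{condition:estimate} directly places the trace in the ``inflow'' regime (values $\leq \theta_f - r$ a.e.\ on the right, and values above $\theta_g + r$ on a set of positive measure on the left), so characteristics do not escape the interface and the one-sided Lipschitz inequality is not violated at $x = 0^\pm$. For longer times, one must invoke the interface entropy condition \eqref{iec} alone, which still forbids the simultaneous escape of characteristics from both sides and, combined with compatibility $f(\theta_f) = g(\theta_g)$, allows the Oleinik estimate to be propagated across $x = 0$ by continuation of backward characteristics. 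The factor $\max\{1, t/\delta, 1/r\}$ records the degradation during this regime transition: $t/\delta$ measures when $t$ exceeds the boundary-layer sweep time, and $1/r$ captures the sensitivity when the initial data is close to the critical values $\theta_f, \theta_g$. The exponent $(pqs+1)/s^2$ then arises from combining the polynomial non-degeneracy in \eqref{fluxc} with the interpolation needed to pass from $BV^{1/p} \cap BV^{1/q}$ down to $BV^s$, and from tracking the multiplicative constants through the two half-line estimates.
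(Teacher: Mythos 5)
Your plan treats the two half-lines as if each were governed by its own classical Oleinik estimate, with the interface contributing only a boundary-layer interval $[0,x_*(t)]$ whose contribution you dismiss as ``the crude oscillation bound contributes at most $(2\|u_0\|_\infty)^{1/s}$''. That step is where the proof fails, and it is precisely where the entire content of the proposition lives. The oscillation of $u(\cdot,t)$ on $[0,x_*(t)]$ bounds a \emph{single} increment $|u(x_{i+1},t)-u(x_i,t)|^{1/s}$, not the fractional variation over that interval: a partition may place arbitrarily many points inside $[0,x_*(t)]$, and a priori the solution could oscillate there with unbounded $s$-variation. In the region $0<x<R_1(t)$ the explicit formula of Adimurthi--Gowda gives $u(x,t)=f_+^{-1}g\bigl(u_0(z_+(x,t))\bigr)$ with $z_+(x,t)<0$, i.e.\ the values are transported from the \emph{left} of the interface and pushed through the singular map $f_+^{-1}\circ g$. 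The paper's proof is devoted to bounding the modulus of continuity of $a\mapsto f_+^{-1}g((g')^{-1}(a))$ on its effective domain: condition \eqref{condition:estimate} forces $(g')^{-1}(-z_+/t_+)\ge \theta_g+C_g\min\{\delta/t,r\}$, the flatness lemma (Lemma \ref{lemma:g-condition}) then gives $f_+^{-1}g(\cdot)\ge \theta_f+C\min\{\delta/t,r\}^{q+1}$, and the non-degeneracy of $f$ turns this into a derivative bound of order $\max\{t/\delta,1/r\}^{pq+p}$ for the composed map. That computation is the sole source of the factor $[\max\{1,t/\delta,1/r\}]^{(pqs+1)/s^2}$; your proposal attributes this factor to ``interpolation'' and a ``regime transition'' without any mechanism that would produce it.

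A second, related problem is your claim that for large times the interface entropy condition \eqref{iec} ``allows the Oleinik estimate to be propagated across $x=0$ by continuation of backward characteristics''. Backward characteristics do not continue across the interface; they terminate there, and the value on the other side is determined by the Rankine--Hugoniot relation $f(u^+)=g(u^-)$, i.e.\ by the singular maps, whose Lipschitz constants blow up as the transmitted values approach $\theta_g$ (respectively $\theta_f$). This is exactly why the compatibility assumption $f(\theta_f)=g(\theta_g)$ is needed and why, without it, the counterexamples of \cite{fractional} exhibit a genuine loss of regularity. Any correct proof must quantify the degeneration of $f_+^{-1}g$ near the critical point rather than continue characteristics through $x=0$.
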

  
 Another nice property of $BV^s$ solutions is known for all Lipschitz flux,   
 $BV^s$ regularity  in space of the initial data is propagated for all time \cite{junca1, JH}. It means the corresponding entropy solution $u(x,t)$ has the $BV^s$ regularity in space for all time $t>0$. For discontinuous flux, the situation is more complicated due to the interface. To use the Lax-Oleinik formula with an interface  \cite{Kyoto} we limit ourselves to convex $C^1$ fluxes but the non-degeneracy condition \descref{a3}{(A3)} is not required. 
\begin{theorem}[{\bf Propagation of the regularity  for compatible fluxes}]\label{bvs1}
	Let $f$ and $g$ be the $C^{1}$ convex fluxes such that $\min f=\min g$, i.e. they satisfy the assumptions \descref{a1}{(A1)} and \descref{a2}{(A2)}. Let $u(\cdot, t)$ be  the entropy solution of \eqref{1.1} corresponding to an initial data $u_{0}\in BV^{s}(\R)$ for $s\in(0, 1)$. Then $u(\cdot,t)\in BV^s(\R)$ for $t>0$.
	\end{theorem}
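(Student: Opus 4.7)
The plan is to adapt the Lax--Oleinik based argument used for $BV$ initial data in \cite{S} to the fractional $BV^{s}$ setting, using the variational representation with interface from \cite{Kyoto}. For convex $C^{1}$ fluxes $f,g$ with $\min f=\min g$, Adimurthi--Gowda attach to each $(x,t)$ a minimizing backward path, which is either (i) a single straight segment from $(x,t)$ down to some footpoint $(y(x,t),0)$ on the same side of the interface, in which case $u(x,t)=(f')^{-1}((x-y)/t)$ is linked to $u_{0}(y)$ by the usual Lax--Oleinik relation, or (ii) a broken two-segment path with a vertex on the interface $\{x=0\}$ at some time $t^{*}\in(0,t)$, in which case the segment after the vertex carries a rarefaction value at least $\theta_{f}$ (on the right) issued at the interface.

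Next I would establish the key monotonicity of the footpoint map: on each half-line $\{x>0\}$ or $\{x<0\}$, if $x_{1}<x_{2}$ then the corresponding footpoints satisfy $y(x_{1},t)\le y(x_{2},t)$. This is the non-crossing property of Lax--Oleinik minimizers, and the compatibility $f(\theta_{f})=g(\theta_{g})$ is used crucially here: it guarantees that touching the interface incurs no extra cost in the variational problem, so the minimizers vary monotonically with $x$ even across $x=0$. Using this, I would partition $\R\setminus\{0\}$ into (up to) four intervals: two \emph{outer} regions $(x_{+}^{*}(t),\infty)$ and $(-\infty,x_{-}^{*}(t))$ where the minimizing paths are direct, and two \emph{inner} regions $(0,x_{+}^{*}(t))$ and $(x_{-}^{*}(t),0)$ where the paths touch the interface. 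In the outer regions, $y(\cdot,t)$ is non-decreasing; in the inner regions, $u(\cdot,t)$ is a monotone rarefaction issued from the origin taking values in a bounded set.

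Then I would bound $TV^{s}$ on each region. On the outer regions, the monotonicity of $y$ combined with the chain rule for $TV^{s}$ under monotone rearrangement, as used in \cite{junca1,JH} for the smooth-flux propagation result, yields $TV^{s}(u(\cdot,t);\text{outer})\le TV^{s}(u_{0};\R)+C(f,g,t,\|u_{0}\|_{\infty})$, where the constant absorbs the Lax--Oleinik spreading term $(f')^{-1}((\cdot-y)/t)$ whose variation is controlled by the length of the outer region and the local Lipschitz constant of $(f')^{-1}$. On the inner regions, monotonicity of $u$ gives the trivial bound $TV^{s}(u(\cdot,t);\text{inner})\le (2\|u_{0}\|_{\infty})^{1/s}$. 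Summing these together with the finitely many jumps at $x_{\pm}^{*}(t)$ and at $x=0$, each controlled by $(2\|u_{0}\|_{\infty})^{1/s}$, and using subadditivity of $TV^{s}$ over a finite partition, one obtains a finite bound and hence $u(\cdot,t)\in BV^{s}(\R)$.

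The main obstacle will be the justification of monotonicity when $u_{0}\in BV^{s}\setminus BV$: the footpoint map $y(x,t)$ is only defined almost everywhere and may jump at points where $u_{0}$ has a discontinuity, so one must work with a good (precise) representative of $u_{0}$ and, if needed, pass to the limit through approximation by $BV$ data as in \cite{junca1}. A secondary but technical point is verifying that the transition between direct and interface paths occurs precisely at the single threshold points $x_{\pm}^{*}(t)$ rather than on a set of positive measure; this follows from strict convexity of the fluxes but deserves careful verification.
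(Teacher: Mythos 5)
Your overall architecture (Lax--Oleinik representation with an interface, monotone footpoints, splitting according to whether the minimizing path touches $\{x=0\}$) is the same as the paper's, and your outer-region estimate is essentially the paper's bound of terms I and III by $TV^{s}(u_{0})$ via the monotonicity of $z_{\pm}(\cdot,t)$. The gap is in your inner region. In the notation of Theorem \ref{constant} (\descref{thm1-case-2}{Case 2}), for $0<x<R_{2}(t)$ the solution is \emph{not} a monotone rarefaction issued from the origin: it equals $f_{+}^{-1}g\bigl(u_{0}(z_{+}(x,t))\bigr)$, i.e.\ it carries the full oscillation of $u_{0}$ on an interval of the \emph{left} half-line, transported across the interface by the Rankine--Hugoniot relation \eqref{RH} and the singular map $f_{+}^{-1}\circ g$; only on $[R_{2}(t),R_{1}(t)]$ is the solution the constant $\theta_{f}$ coming from the interface rarefaction. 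Your ``trivial bound $TV^{s}(\mathrm{inner})\le(2\|u_{0}\|_{\infty})^{1/s}$ by monotonicity'' therefore fails on $(0,R_{2}(t))$, and this is precisely the region where the theorem could fail: by Lemma \ref{lipschitz} the singular map is in general only H\"older continuous near $\theta_{g}$, so composing it with a $BV^{s}$ function could a priori destroy the $BV^{s}$ regularity.

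The missing idea is the geometric estimate \eqref{bounded}--\eqref{lb:g-inv}: for fixed $t>0$, the left traces $u(0-,t_{+}(x,t))=(g')^{-1}\bigl(-z_{+}(x,t)/t_{+}(x,t)\bigr)$ feeding the singular map are bounded below by some $\delta_{0}>\theta_{g}$ depending on $t$, because the crossing characteristics satisfy $z_{+}(x,t)\le z_{+}(x',t)<0$ and $t_{+}(x,t)<t$. Combined with $f(\theta_{f})=g(\theta_{g})$ --- which guarantees that ``away from $\theta_{g}$'' for the argument means ``away from $\theta_{f}$'' for the image --- this makes $a\mapsto f_{+}^{-1}g\bigl((g')^{-1}(a)\bigr)$ Lipschitz on the relevant range, and the variation of the monotone bounded quantity $-z_{+}/t_{+}$ then bounds the interface term by a constant $C(f,g,t,u_{0})$. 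That is where the compatibility hypothesis does its real work; in your proposal it is invoked only for the non-crossing of minimizers, which by itself does not prevent the loss of regularity exhibited in \cite{fractional} for incompatible fluxes. Your outer-region and jump estimates are fine as stated.
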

 For $s=1$, the propagation of the $BV$ regularity already proved in \cite{S} is recovered for less nonlinear fluxes.
 \medskip

 Now, the two previous theorems can be mixed to get another result. 
	As a consequence of the Theorems  \ref{linf} and \ref{bvs1},  if $u_0$ belongs to $BV^s$ then two cases occur. If $s$ is too small then $u_0$ is regularized. If $s$ is too big then the initial regularity is propagated. Precisely, the following corollary is stated.
    To use  Theorem  \ref{linf}, the flux has to satisfy the non-degeneracy condition \descref{a3}{(A3)}  to get the $BV^s$ smoothing effect.  
	\begin{corollary}[{\bf Optimal regularity for  $BV^s$ initial data and compatible fluxes}]\label{bvs}
		Let $f$ and $g$ be the fluxes such that $f(\theta_{f})=g(\theta_{g})$ and  the fluxes satisfy the non-degeneracy condition \eqref{fluxc}. Let $u(\cdot, t)$ be  the entropy solution of \eqref{1.1} corresponding to an initial data $u_{0}\in BV^{s}(\R)$ for $s\in(0, 1)$. Then, $u(\cdot,t)\in BV^{s_1}([-M,M])$ for $t>0,M>0$ where $s_{1}=\min\{\max\{p^{-1}, s\}, \max\{q^{-1}, s\}\}$.
		%
	\end{corollary}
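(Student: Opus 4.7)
The plan is to combine the smoothing effect of Theorem~\ref{linf} with the propagation result of Theorem~\ref{bvs1}, and then reconcile the two resulting exponents via an elementary algebraic identity.

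First I would note that $BV^{s}(\R)\subset L^{\infty}(\R)$ for every $s\in(0,1]$: testing the defining supremum on the two-point partition $\{x_{0},x\}$ yields $|u(x)-u(x_{0})|\le TV^{s}(u)^{s}$, so in particular $u_{0}\in L^{\infty}(\R)$. Since the hypotheses \descref{a1}{(A1)}--\descref{a3}{(A3)} are in force, Theorem~\ref{linf} then delivers $u(\cdot,t)\in BV^{s_{0}}_{\loc}(\R)$ for every $t>0$, with $s_{0}=\min\{1/p,1/q\}$. In parallel, since $u_{0}\in BV^{s}(\R)$ and the fluxes are $C^{1}$ convex with common minimum value, Theorem~\ref{bvs1} gives $u(\cdot,t)\in BV^{s}(\R)$ for every $t>0$.

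Next I would merge these two memberships on a fixed window $[-M,M]$. The standard nesting $BV^{\sigma_{2}}([-M,M])\subset BV^{\sigma_{1}}([-M,M])$ for $\sigma_{1}\le\sigma_{2}$, a consequence of the elementary pointwise bound $a^{1/\sigma_{1}}\le (2\|u\|_{\infty})^{1/\sigma_{1}-1/\sigma_{2}}\,a^{1/\sigma_{2}}$ applied termwise in any partition sum, shows that belonging to $BV^{\sigma_{1}}_{\loc}\cap BV^{\sigma_{2}}_{\loc}$ is equivalent to belonging to $BV^{\max(\sigma_{1},\sigma_{2})}_{\loc}$. Applied with $\sigma_{1}=s_{0}$ and $\sigma_{2}=s$, this yields $u(\cdot,t)\in BV^{\max(s,s_{0})}([-M,M])$ for every $t>0$ and every $M>0$.

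It then remains to identify $\max\{s,s_{0}\}$ with the announced exponent $s_{1}=\min\{\max\{1/p,s\},\max\{1/q,s\}\}$. This reduces to the real-valued identity
\[
\min\{\max\{a,s\},\max\{b,s\}\}=\max\{s,\min\{a,b\}\},
\]
verified by a short case split on the ordering of $s$ against $a$ and $b$; specializing to $a=1/p$, $b=1/q$ finishes the argument. The main ``obstacle'' here is really only bookkeeping: all analytic content has been absorbed into Theorems~\ref{linf} and~\ref{bvs1}, and the corollary is simply the correct combinatorial repackaging of their conclusions together with the nesting of fractional $BV$ spaces.
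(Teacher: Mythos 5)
Your proposal is correct and follows essentially the same route as the paper, which likewise deduces the corollary directly by combining Theorem~\ref{linf} and Theorem~\ref{bvs1}. You merely make explicit the steps the paper leaves implicit (the inclusion $BV^{s}\subset L^{\infty}$, the nesting of the $BV^{\sigma}$ scale on bounded intervals, and the identity $\max\{s,\min\{1/p,1/q\}\}=\min\{\max\{1/p,s\},\max\{1/q,s\}\}$), all of which check out.
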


		\section{Preliminaries on scalar conservation laws with an interface}\label{def}
		The fundamental paper used here is \cite{Kyoto} where Adimurthi and Gowda settle an important  foundation of the theory on scalar conservation laws with an interface and two convex fluxes. 
		In this paper, the author proposed the natural entropy condition which means that no information comes only from the interface but crosses or goes toward the interface.   Such entropy condition is in the spirit of Lax-entropy conditions for shock waves. To make this paper self-contained, we recall some definitions and results. The following theorem can be found in \cite{Kyoto} Lemma 4.9 on page 51. It is  a Lax-Oleinik or Lax-Hopf formula for the initial value problem \eqref{1.1}.
	\begin{theorem}[\cite{Kyoto}]\label{constant}
		Let $u_{0}\in L^{\f}(\R)$, then there exists the entropy solution $u(\cdot, t)$ of \eqref{1.1} corresponding to an initial data $u_{0}$. Furthermore, there exist Lipschitz curves $R_{1}(t)\geq  R_{2}(t)\ge0$ and $L_{1}(t)\leq  L_{2}(t)\le0$, monotone functions $z_{\pm}(x,t)$ non-decreasing in $x$ and non-increasing in $t$ and $t_{\pm}(x,t)$ non-increasing in $x$ and non-decreasing in $t$ such that the solution $u(x,t)$ can be given by the explicit formula for  almost all $t>0$,
		\begin{eqnarray*}
			u(x,t)=\left\{\begin{array}{lllll}
				(f')^{-1}\left(\frac{x-z_+(x,t)}{t}\right)& \mbox{ if }& x\ge R_{1}(t),\\
				(f')^{-1}\left(\frac{x}{t-t_{+}(x,t)}\right) &\mbox{ if }& 0\le x<R_{1}(t),\\
				(g')^{-1}\left(\frac{x-z_-(x,t)}{t}\right) &\mbox{ if }& x\le L_{1}(t),\\
				(g')^{-1}\left(\frac{x}{t-t_{-}(x,t)}\right) &\mbox{ if }& L_{1}(t)<x<0.
			\end{array}\right.
		\end{eqnarray*}
		Furthermore, if $f(\theta_{f})\ge g(\theta_{g})$ then $R_1(t)=R_2(t)$ and if $f(\theta_{f})\le g(\theta_{g})$ then $L_1(t)=L_2(t)$. We also  have only three cases and following formula to compute the solution: 
		
		\begin{description}
			\descitem{Case 1:}{thm1-case-1} $L_{1}(t)=0$ and $R_{1}(t)=0$,
			\begin{eqnarray*}
				u(x, t)=\left\{\begin{array}{lllll}
					u_{0}(z_{+}(x, t))&\mbox{ if }& x>0,\\
					u_{0}(z_{-}(x, t))& \mbox{ if }& x<0.
				\end{array}\right.
			\end{eqnarray*}
			\descitem{Case 2:}{thm1-case-2} $L_{1}(t)=0$ and $R_{1}(t)>0$, then
			\begin{eqnarray*}
				u(x,t)=\left\{\begin{array}{lllll}
					f_{+}^{-1}g(u_{0}(z_{+}(x,t)))&\mbox{ if }& 0<x<R_{2}(t),\\
					f_{+}^{-1}g(\theta_{g}) &\mbox{ if }& R_{2}(t)\le x\le R_{1}(t),\\
					u_{0}(z_{-}(x,t))& \mbox{ if }& x<0.
				\end{array}\right.
			\end{eqnarray*}
			\descitem{Case 3:}{thm1-case-3} $L_{1}(t)<0$, $R_{1}(t)=0$, then
			\begin{eqnarray*}
				u(x,t)=\left\{\begin{array}{lllll}
					g_{-}^{-1}f(u_{0}(z_{-}(x,t)))&\mbox{ if }& L_{2}(t)<x<0,\\
					u_{0}(z_{-}(x, t))&\mbox{ if }&  x\le L_{1}(t),\\
					g_{-}^{-1}f(\theta_{f})& \mbox{ if }& L_{1}(t)<x<L_{2}(t).
				\end{array}\right.
			\end{eqnarray*}
		\end{description}
	\end{theorem}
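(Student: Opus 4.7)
The statement is essentially Lemma 4.9 of \cite{Kyoto}, so the natural plan is to follow the Hamilton-Jacobi / variational strategy of Adimurthi-Gowda. First I would pass from \eqref{1.1} to its primitive formulation: setting $U(x,t) = \int_0^x u(y,t)\,dy$ turns the conservation law with interface into a Hamilton-Jacobi equation $U_t + F(x, U_x) = 0$ with discontinuous Hamiltonian $F(x,p) = H(x)f(p) + (1-H(x))g(p)$. The entropy solution of \eqref{1.1} then corresponds to the (appropriately defined) viscosity solution of this HJ equation, which can be constructed by a variational Lax-Hopf formula adapted to the interface.

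Next I would set up the variational formula by minimizing a cost functional over admissible trajectories from $(x,t)$ back to time $0$. An admissible trajectory is a piecewise affine broken line whose slopes are constant on each side of $\{x=0\}$ and whose possible corners occur only on the interface. The cost on a pure segment is $t f^*((x-y)/t)$ or $t g^*((x-y)/t)$ with $f^*, g^*$ the Legendre duals; at a corner on the interface one pays the interface cost prescribed by the compatibility condition $f(\theta_f)=g(\theta_g)$ together with \eqref{RH} and \eqref{iec}. The Euler-Lagrange analysis then gives either straight characteristics that reach time $0$ at $y=z_\pm(x,t)$, or broken characteristics that first hit $x=0$ at some time $t_\pm(x,t)$ and then travel from $(0,t_\pm)$ with the critical slope $(f')^{-1}(0) = \theta_f$ (on the right) or $(g')^{-1}(0) = \theta_g$ (on the left).

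The case distinction then reflects which side ``wins'' at the interface. Case 1 is when optimal trajectories on both sides leave the interface outward and reach the initial data: the formula reduces to $u_0(z_\pm(x,t))$. Case 2 is when $f(\theta_f) \le g(\theta_g)$ fails in the sense that material piles up on $x>0$, so a rarefaction fan emerges into $x>0$; on $[R_2(t), R_1(t)]$ the value is forced to the Rankine-Hugoniot/interface-entropy constant $f_+^{-1}g(\theta_g)$, while for $0 < x < R_2(t)$ the broken characteristic through $(0,t_+(x,t))$ gives $(f')^{-1}(x/(t-t_+))$, and for $x<0$ one still has the unbroken form $u_0(z_-(x,t))$. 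Case 3 is the mirror image on the left.

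The monotonicity of $z_\pm$ and $t_\pm$, the Lipschitz regularity of $L_i, R_i$, and the dichotomy $R_1 = R_2$ or $L_1 = L_2$ according to the sign of $f(\theta_f) - g(\theta_g)$ would be obtained by the standard ``no-crossing of optimal trajectories'' argument: if $x < x'$ produced $z_\pm(x,t) > z_\pm(x',t)$ (or analogous for $t_\pm$), swapping the endpoints of the two minimizers contradicts the strict convexity of $f^*$ and $g^*$. The main obstacle, and the heart of the argument that goes beyond the classical Lax-Oleinik formula, is establishing the interface transmission rule: one has to show that an optimal trajectory crosses $x=0$ at most once and that the incoming and outgoing slopes there are exactly $\theta_g$ and $\theta_f$ respectively. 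This step uses strict convexity together with the crucial compatibility assumption $\min f = \min g$ from \descref{a2}{(A2)}; without it the interface cost is not minimized at the critical slopes and the closed form above breaks down.
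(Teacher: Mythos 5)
First, a point of orientation: the paper does not prove this theorem at all --- it is imported verbatim from Adimurthi--Gowda \cite{Kyoto} (Lemma~4.9 there), so there is no internal argument to compare against. Your reconstruction of the Hamilton--Jacobi/Lax--Hopf strategy with interface costs is indeed the route taken in \cite{Kyoto}, and the no-crossing argument for the monotonicity of $z_{\pm}$ and $t_{\pm}$ is the standard one. However, the role you assign to \descref{a2}{(A2)} is inverted. The explicit formula, the transmission rule at the interface, and the case structure are established in \cite{Kyoto} for arbitrary strictly convex superlinear fluxes with \emph{no} compatibility assumption between $\min f$ and $\min g$; that is precisely why the statement records the two separate one-sided implications ($f(\theta_f)\ge g(\theta_g)\Rightarrow R_1=R_2$ and $f(\theta_f)\le g(\theta_g)\Rightarrow L_1=L_2$) and why Case~2 contains a genuine constant plateau $f_+^{-1}g(\theta_g)$ on $[R_2(t),R_1(t)]$, which collapses only when \descref{a2}{(A2)} holds. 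Claiming that without $\min f=\min g$ ``the closed form above breaks down'' contradicts the logic of the present paper, whose entire point is that \descref{a2}{(A2)} is an \emph{additional} geometric hypothesis imposed on top of this general formula to gain regularity.

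Second, you do not justify the reduction to exactly three cases, i.e., why $L_1(t)<0$ and $R_1(t)>0$ cannot occur simultaneously. This is exactly where the interface entropy condition \eqref{iec} enters: $R_1(t)>0$ means waves emanate from the interface into $x>0$ (so $u^+$ lies in the range of $f_+^{-1}$, where $f'\ge 0$), and $L_1(t)<0$ means waves emanate into $x<0$ (so $g'(u^-)\le 0$); having both on a set of times of positive measure is what \eqref{iec} rules out. A smaller inaccuracy: after a backward trajectory from $0<x<R_2(t)$ reaches the interface at time $t_+(x,t)$, its continuation into $x<0$ does not travel at a critical slope --- its slope is $g'$ evaluated at the state transmitted through the Rankine--Hugoniot relation \eqref{RH}, which is how the composition $f_+^{-1}g(u_0(z_+(x,t)))$ arises; the critical values $\theta_f,\theta_g$ govern only the plateau regions $[R_2,R_1]$ and $[L_1,L_2]$.
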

	\begin{center}
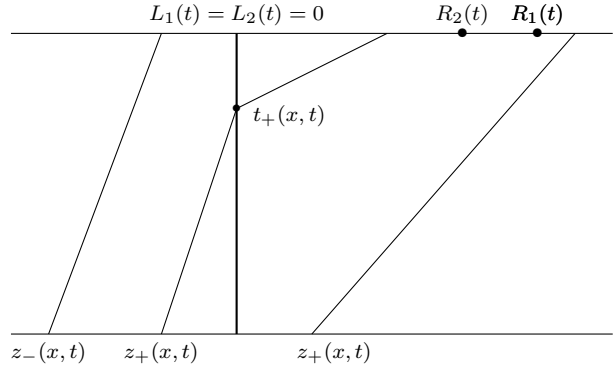
\label{figure-1}
		\begin{tikzpicture}
			\draw (-3,0)--(5,0);
			\draw[thick] (0,0)--(0,4);
			\draw (-3,4)--(5,4);
			\draw (2,4)--(0,3);
			\draw (0,3)--(-1,0);
			\draw (4.5,4)--(1,0);
			\draw (-1,4)--(-2.5,0);
			\draw (.7,2.9) node{\scriptsize $t_{+}(x,t)$};
			\draw (4,4.25) node{\scriptsize $R_{1}(t)$};
			\draw (0,3) node{\tiny $\bullet$};
			\draw (4,4) node{\scriptsize $\bullet$};
			\draw (3,4.25) node{\scriptsize $R_{2}(t)$};
			\draw (3,4) node{\scriptsize $\bullet$};
			\draw (4,4.25) node{\scriptsize $R_{1}(t)$};
			\draw (0,4.25) node{\scriptsize $L_{1}(t)=L_{2}(t)=0$};
			\draw (-1,-.25) node{\scriptsize $z_{+}(x,t)$};
			\draw (-2.5,-.25) node{\scriptsize $z_{-}(x,t)$};
			\draw (1.3,-.25) node{\scriptsize $z_{+}(x,t)$};
		\end{tikzpicture}
		\captionof{figure}{An illustration of solution for \descref{thm1-case-2}{Case 2} and $L_{i}(t)$ and $R_{i}(t)$ curves}
	\end{center}
	There is a maximum principle  for such entropy solutions, but more complicate than for $f=g$,
	\begin{equation}\label{eq:max}
		\|u\|_\infty \leq \max \left(  \|u_0\|_\infty ,   \sup_{|v| \leq \|u_0\|_\infty }|f^{-1}_+ ( g(v))|,  \sup_{|v| \leq \|u_0\|_\infty }|g^{-1}_- ( f(v))|  \right)=:S_{f,g,\norm{u_0}_\f}.
	\end{equation}
	In the above theorem, the curves $R_i$ play an important role to understand the structure inherited by the entropy solution. They separate the regions of three types of `characteristics' as introduced in \cite{Kyoto}.  
\bigskip

Now, the definition of $BV^s(I)$ is recalled for $I$ an interval of $\R$. First, the $s$-fractional total variation is,  for $ 0 < s \le 1$ and $p=1/s$, 
\begin{equation}
TV^s(u,I)=\sup_{\sigma } \sum_{i=1}^n |u(x_i) - u(x_{i-1})|^p,
\end{equation}
where the supremum is taken over all subdivisions  $\sigma$ of the interval $I$. Such a subdivision is a finite ordered subset of $I$,   $ \sigma = \{x_i,\; i=0,\ldots n\}$ for a $n \in \N$  and   $x_0 < x_1 < \ldots < x_n$. 

Second, $BV^s(I)$ is the space of real functions on I  with bounded $s$-fractional total variation, called also $p$-variation.
The exponent $s$ means that $BV^s$ is almost but different than the Sobolev spaces $W^{s,p}(I)$  (when $I$ is bounded) \cite{junca1}.  Thus the exponent $s$ corresponds to  the fractional derivative $s$.   

Some applications of fractional $BV$  spaces for scalar conservation laws with convex flux can be found in the non-exhaustive list 
\cite{CJJ, CJ1, CJLO, SAJJ}.  $BV^s$ is compactly embedded in $L^1_{loc}$ and capture the shock structure as $BV$ but with less regularity.
\bigskip 

Before we prove the main results in the next section, we need to state some important elementary lemmas. In these lemmas, we prove the non-Lipschitz and Lipschitz regularity of the singular map.  The two first lemmas are in \cite{fractional}, thus the proofs are omitted.
	\begin{lemma}[ {\bf H\"older continuity of the inverse} \cite{fractional}]\label{lemma:Holder}
		Let $g\in C^1(\R)$ be satisfying the non-degeneracy \eqref{fluxc} with exponent $q$. Then $(g^{\p})^{-1}$ is H\"older continuous with exponent $1/q$.
	\end{lemma}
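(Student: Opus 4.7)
The plan is to invert the non-degeneracy inequality \eqref{fluxc} directly. The hypothesis says that on any compact $K$ and for $u\neq v$ in $K$,
\begin{equation*}
|g'(u)-g'(v)| > C_2 |u-v|^q.
\end{equation*}
In particular, $g'$ is strictly monotone on $K$ (otherwise we could find $u\neq v$ with $g'(u)=g'(v)$, contradicting the strict lower bound), hence $g'$ is a bijection from $K$ onto its image $g'(K)$, which is itself a compact interval (since $g'$ is continuous), and $(g')^{-1}$ is well-defined and continuous on $g'(K)$.

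Next, given any two points $a,b\in g'(K)$, I would set $u=(g')^{-1}(a)$ and $v=(g')^{-1}(b)$, so that $u,v\in K$. Plugging into the non-degeneracy condition and rearranging gives
\begin{equation*}
|(g')^{-1}(a)-(g')^{-1}(b)|^{q} = |u-v|^q < \frac{|g'(u)-g'(v)|}{C_2} = \frac{|a-b|}{C_2},
\end{equation*}
so that
\begin{equation*}
|(g')^{-1}(a)-(g')^{-1}(b)| \le C_2^{-1/q}\,|a-b|^{1/q}.
\end{equation*}
This is precisely the H\"older estimate with exponent $1/q$ on $g'(K)$.

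Finally, I would remark that the statement is local: since the constant $C_2$ depends on the compact set $K\subset\R$, the H\"older continuity holds on every compact subset of the domain of $(g')^{-1}$, with constant $C_2^{-1/q}$. The only subtle point worth flagging is the use of the strict inequality in \eqref{fluxc} to rule out non-injectivity of $g'$; apart from that, the argument is a one-line algebraic manipulation and there is no genuine obstacle.
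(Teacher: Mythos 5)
Your proof is correct: inverting the non-degeneracy inequality $|g'(u)-g'(v)|>C_2|u-v|^q$ with $u=(g')^{-1}(a)$, $v=(g')^{-1}(b)$ immediately gives the H\"older estimate with exponent $1/q$ and constant $C_2^{-1/q}$ on the image of each compact set, and your remark about injectivity/monotonicity of $g'$ is sound. The paper itself omits the proof of this lemma (deferring to the cited reference), and your one-line inversion is exactly the standard argument one would expect there.
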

	\begin{lemma}[{\bf Flatness of the antiderivative} \cite{fractional}]\label{lemma:g-condition}
		Let $g$ be a $C^2$ function satisfying \eqref{fluxc} with exponent $q$ then $g_+$ satisfies \eqref{fluxc} with exponent $q+1$ on domain $(\theta_g,\f)$.
	\end{lemma}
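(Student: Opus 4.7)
The plan is to integrate the non-degeneracy bound on $g'$ along an interval to transfer it to $g$ itself, noting that $g$ is the derivative of the antiderivative $g_+$, so controlling $|g(u)-g(v)|$ from below by $|u-v|^{q+1}$ is exactly the required condition \eqref{fluxc} with exponent $q+1$ for $g_+$.

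First, I would fix notation. Since $g$ is $C^2$ and satisfies \eqref{fluxc} with exponent $q$, in particular $g'$ is strictly increasing, so $g$ is strictly convex with a unique minimum at $\theta_g$ where $g'(\theta_g)=0$. On the open half-line $(\theta_g,\infty)$ we have $g'>0$. Let $g_+$ denote an antiderivative of $g$, i.e. $g_+'=g$. The claim becomes: there exists $C>0$ such that for all $u\neq v$ in $(\theta_g,\infty)$,
\begin{equation*}
\frac{|g(u)-g(v)|}{|u-v|^{q+1}}>C.
\end{equation*}

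Next, without loss of generality take $u>v>\theta_g$. By the fundamental theorem of calculus and by splitting $g'(w)=g'(v)+(g'(w)-g'(v))$,
\begin{equation*}
g(u)-g(v)=\int_v^u g'(w)\,dw=g'(v)(u-v)+\int_v^u (g'(w)-g'(v))\,dw.
\end{equation*}
The non-degeneracy hypothesis \eqref{fluxc} applied to $w>v$ gives $g'(w)-g'(v)>C_2(w-v)^q$, so
\begin{equation*}
\int_v^u (g'(w)-g'(v))\,dw \geq C_2\int_v^u (w-v)^q\,dw = \frac{C_2}{q+1}(u-v)^{q+1}.
\end{equation*}
Since $v>\theta_g$ implies $g'(v)>0$, the term $g'(v)(u-v)$ is positive and can be dropped, yielding
\begin{equation*}
g(u)-g(v)> \frac{C_2}{q+1}(u-v)^{q+1}.
\end{equation*}
The case $v>u>\theta_g$ is identical after swapping labels, giving the lemma with constant $C=C_2/(q+1)$.

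There is no real obstacle here: the argument is a one-line integration of the pointwise non-degeneracy bound on $g'$. The only place to be careful is that the positivity of $g'(v)$ (which allowed us to drop the linear term painlessly) requires $v>\theta_g$, which is exactly why the statement is restricted to the half-line $(\theta_g,\infty)$; on the other side $g'$ is negative and the analogous statement would hold for the symmetric branch after an obvious sign adjustment.
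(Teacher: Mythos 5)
Your proof is correct. The paper itself omits the proof of this lemma, deferring to \cite{fractional}, but your argument --- using monotonicity of $g'$ on $(\theta_g,\infty)$ to drop the absolute values in \eqref{fluxc} and then integrating the pointwise bound $g'(w)-g'(v)>C_2(w-v)^q$ over $[v,u]$ --- is the natural one, and it produces exactly the inequality $|g(u)-g(v)|\ge \frac{C_2}{q+1}|u-v|^{q+1}$ with the constant $\frac{C_2}{q+1}$ that the paper later invokes in the proof of Lemma \ref{lipschitz}, so it is evidently the intended argument.
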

The following lemma establishes the Lipschitz continuity of the ``singular maps" away from $\theta_g$ and $\theta_f$. Before stating the lemma, let us define the singular maps. Let  $g_{-}^{-1}$ and $f_{+}^{-1}$ are the inverse of $g$ and $f$ respectively, with appropriate domains given by:

	\begin{eqnarray}\label{inverses}
 \left\{\begin{array}{rlllll}
     g_{-}^{-1}:((g')^{-1} ( - \infty),   (g')^{-1} (0)] \rightarrow \R  \\
     f_{+}^{-1}: [(f')^{-1} (0), (f')^{-1} (+ \infty))  \rightarrow \R.
        \end{array}\right.
	\end{eqnarray}
   The functions $f^{-1}_{+}g(\cdot)$ and $g^{-1}_{-}f(\cdot)$, namely, singular maps, plays crucial role in traferring information via the interface from left-to-right and right-to-left respectively.
\begin{lemma}[{\bf Regularity of singular maps}]\label{lipschitz}
		Let $f$ and $g$ be the  $C^{1}(\R)$ and convex functions with $f(\theta_{f})=g(\theta_{g})$ with restricted flatness, i.e., satify \descref{a1}{(A1)}, \descref{a2}{(A2)} and \descref{a3}{(A3)}. Let $K$ is any compact set of $\R$. Then $f^{-1}_{+}g(\cdot)$ and $g^{-1}_{-}f(\cdot)$ are H\"older continuous functions.
  Moreover, $f^{-1}_{+}g(\cdot)$ is a Lipschitz continuous function on $K\setminus[\theta_{g}-\varepsilon,\theta_{g}+\varepsilon]$ and $g^{-1}_{-}f(\cdot)$ is a Lipschitz continuous function on $K\setminus[\theta_{f}-\varepsilon,\theta_{f}+\varepsilon]$ for any fixed $\varepsilon>0$.
	\end{lemma}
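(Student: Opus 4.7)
The plan is to prove the statement for $f^{-1}_+\circ g$; the argument for $g^{-1}_-\circ f$ is symmetric. The compatibility \descref{a2}{(A2)} gives $g(\theta_g)=f(\theta_f)=\min f=\min g$, hence $g(u)\ge f(\theta_f)$ for every $u\in\R$; this makes $f^{-1}_+(g(u))$ well-defined in $[\theta_f,+\infty)$ and continuous on $\R$.

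For the H\"older part on a compact set $K$, the only obstruction to Lipschitz regularity is the critical value $f(\theta_f)$, where $f'$ vanishes. I would first apply Lemma \ref{lemma:g-condition} to $f$: its antiderivative satisfies \eqref{fluxc} with exponent $p+1$ on $(\theta_f,+\infty)$, which rewritten reads
\[
|f(u)-f(v)|\ge C|u-v|^{p+1}\quad\text{for }u,v\in[\theta_f,+\infty)\cap K'
\]
on any compact $K'\subset\R$. Inverting this inequality (equivalently, applying Lemma \ref{lemma:Holder} to the antiderivative of $f$) yields that $f^{-1}_+$ is H\"older continuous with exponent $1/(p+1)$ on bounded subsets of its domain. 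Composing with $g\in C^1(\R)$, which is Lipschitz on the compact set $K$, then produces H\"older continuity of $f^{-1}_+\circ g$ on $K$.

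For the Lipschitz part on $K\setminus[\theta_g-\varepsilon,\theta_g+\varepsilon]$, I would use the chain rule
\[
\frac{d}{du}\bigl[f^{-1}_+(g(u))\bigr]=\frac{g'(u)}{f'\bigl(f^{-1}_+(g(u))\bigr)}
\]
and bound it uniformly. Strict convexity of $g$ and compactness of the restricted set produce $\mu:=\min_{u\in K\setminus[\theta_g-\varepsilon,\theta_g+\varepsilon]}(g(u)-g(\theta_g))>0$, and hence $f^{-1}_+(g(u))\ge \theta_f+\mu'$ for some $\mu'>0$. Since $f'$ is continuous and strictly positive on $[\theta_f+\mu',+\infty)$ by strict convexity of $f$, the denominator is uniformly bounded below away from zero, while $g'$ is bounded on the compact set; so the derivative is bounded and the map is Lipschitz.

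The main (mild) obstacle is the book-keeping: chaining Lemmas \ref{lemma:g-condition} and \ref{lemma:Holder} in the right order to extract the correct H\"older exponent of $f^{-1}_+$ at its critical endpoint, and verifying that $f(\theta_f)$ really is the sole obstruction to Lipschitz regularity so that the Lipschitz bound only degenerates at $u=\theta_g$. Beyond this no analytic difficulty arises: both claims follow from \eqref{fluxc}, strict convexity, and the explicit chain rule for the composition.
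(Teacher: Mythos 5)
Your proposal is correct and follows essentially the same route as the paper: Lemma \ref{lemma:g-condition} applied to $f$ gives the $1/(p+1)$-H\"older modulus of $f_{+}^{-1}$ at the critical value, which composed with the locally Lipschitz $g$ yields the H\"older claim, while away from $\theta_g$ the value $g(u)$ stays above $f(\theta_f)$ so that $f'$ is bounded below on the relevant range and the composition is Lipschitz. The only cosmetic difference is that the paper bounds the difference quotient directly via the mean value theorem for $f$ rather than through the chain rule for the composition; the two arguments are identical in substance.
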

 Indeed,  if $x$ is far from $\theta_g$, then $g(x)$ is far from $\theta_f$  since $f(\theta_f)=g(\theta_g)$. Thus, $f^{-1}_{+}g(\cdot)$ is Lipschitz far from $\theta_g$. The symmetric result occurs for  $g^{-1}_{-}f(\cdot)$  far from $\theta_f$. Now, the detailed proof is given. 
\begin{proof}
    Let $x_{1}, x_{2}\in K$ then there exist $y_{1}, y_{2}\ge\theta_{f}$ such that $g(x_{1})=f(y_{1})$ and $g(x_{2})=f(y_{2})$.  Without loss of generality we can assume that $g(x_{1})\not=g(x_{2})$ because if $g(x_{1})=g(x_{2})$ then result holds anyway. As $f^{-1}_+$ is increasing, we get $y_{1}, y_{2}\ge\theta_{f}$. Let us consider the following 
		\begin{eqnarray*}
			\frac{|f^{-1}_{+}g(x_{1})-f^{-1}_{+}g(x_{2})|^{p+1}}{|x_{1}-x_{2}|}&=&\frac{|f^{-1}_{+}g(x_{1})-f^{-1}_{+}g(x_{2})|^{p+1}}{|g(x_{1})-g(x_{2})|}\cdot\frac{|g(x_{1})-g(x_{2})|}{|x_{1}-x_{2}|}\\
			&=&\frac{|f^{-1}_{+}f(y_{1})-f^{-1}_{+}f(y_{2})|^{p+1}}{|f(y_{1})-f(y_{2})|}\cdot\frac{|g(x_{1})-g(x_{2})|}{|x_{1}-x_{2}|}\\
			&=&\frac{|y_{1}-y_{2}|^{p+1}}{|f(y_{1})-f(y_{2})|}\cdot\frac{|g(x_{1})-g(x_{2})|}{|x_{1}-x_{2}|}
		\end{eqnarray*} 
  From Lemma \ref{lemma:g-condition} we have
  \begin{eqnarray*}
      |f(x)-f(y)|\ge\frac{C_{2}}{p+1}|x-y|^{p+1} &\mbox{ for any }x\not=y,
  \end{eqnarray*}
 and $g$ is Lipschitz continuous function, we have $\abs{g(x_{1})-g(x_{2})}\leq c_1\abs{x_{1}-x_{2}}$ where $c_1$ depends on $g$ and $K$. Therefore,  we get a constant $C$, 
		\begin{equation*}
			\frac{|f^{-1}_{+}g(x_{1})-f^{-1}_{+}g(x_{2})|^{p+1}}{|x_{1}-x_{2}|}\le C.
		\end{equation*}
  Similarly, we can prove that
            \begin{equation*}
			\frac{|g^{-1}_{-}f(x_{1})-g^{-1}_{-}f(x_{2})|^{q+1}}{|x_{1}-x_{2}|}\le C.
		\end{equation*}
  Thus the singular maps are H\"older continuous functions. 
 \\
For any $x_{1}, x_{2}\in K \backslash [\theta_g -\varepsilon, \theta_g + \varepsilon]$ such that $g(x_{1}), g(x_{2})\ge g(\theta_{g})+\varepsilon_{1}$ for a positive $\varepsilon_1$ only depending on $\varepsilon$. Then for some $\varepsilon_{0}>0$ only depending on $\varepsilon_1$ there exist $y_{1}, y_{2}\ge \theta_{f}+\varepsilon_{0}$ such that $g(x_{1})=f(y_{1})$ and $g(x_{2})=f(y_{2})$. Let us consider the following 
		\begin{eqnarray*}
			\frac{|f^{-1}_{+}g(x_{1})-f^{-1}_{+}g(x_{2})|}{|x_{1}-x_{2}|}&=&\frac{|f^{-1}_{+}g(x_{1})-f^{-1}_{+}g(x_{2})|}{|g(x_{1})-g(x_{2})|}\cdot\frac{|g(x_{1})-g(x_{2})|}{|x_{1}-x_{2}|}\\
			&=&\frac{|f^{-1}_{+}f(y_{1})-f^{-1}_{+}f(y_{2})|}{|f(y_{1})-f(y_{2})|}\cdot\frac{|g(x_{1})-g(x_{2})|}{|x_{1}-x_{2}|}\\
			&=&\frac{|y_{1}-y_{2}|}{|f(y_{1})-f(y_{2})|}\cdot\frac{|g(x_{1})-g(x_{2})|}{|x_{1}-x_{2}|}\\
                &=&\frac{1}{f'(\xi)}\cdot\frac{|g(x_{1})-g(x_{2})|}{|x_{1}-x_{2}|}
		\end{eqnarray*}  
 where $\xi\in(y_{1}, y_{2})$. Now from the Lipschitz continuous of $g$ and from the convexity of $f$ we get
 \begin{equation*}
			\frac{|f^{-1}_{+}g(x_{1})-f^{-1}_{+}g(x_{2})|}{|x_{1}-x_{2}|}\le C.
		\end{equation*}
 In a similar fashion, we can prove
  \begin{equation*}
			\frac{|g^{-1}_{-}f(x)-g^{-1}_{-}f(y)|}{|x_{1}-x_{2}|}\le C.
		\end{equation*}

  \end{proof}

	\section{Proof of main results}\label{mainproof}
In this section, we first prove the smoothing effect in fractional $BV$ space with only $L^\infty$ initial data, 
that is Theorem  \ref{linf}. The proof is detailed in the long subsection \ref{th2} with an explicit $BV^s$ estimate, Proposition \ref{prop:estimate}. After, similar arguments are used in the last short subsection to prove the propagation of fractional $BV$ regularity and the smoothing effect when the initial data is already in $BV^s$. 
	\subsection{Smoothing effect for compatible fluxes with $L^\infty$ initial data}\label{th2}
   The following long proof contains the main ingredients to study the regularity of the entropy solution through the interface. 
   After, an explicit fractional $BV$  estimate is given in a favorable case. 
	\begin{proof}[Proof of Theorem \ref{linf}]
		We assume that $f(\theta_{f})=g(\theta_{g})$, hence, from Theorem \ref{constant} we have $L_2(t)=L_1(t)$ and $R_2(t)=R_1(t)$, hence, it is enough to consider the following two cases. The second case is similar so, only the first case is detailed.
		\begin{description}
			
			\descitem{Case (i):}{case-1} If $L_{1}(t)=0$ and $R_{1}(t)\geq 0$.\\
			Consider the partition $\sigma=\{-M=x_{-n}\le\cdots<x_{-1}<x_{0}\le0<x_{1}<\cdots<x_{m}\le R_{1}(t)<x_{m+1}<\cdots\le x_{n}=M \}$ and let $s=\min\{1/p, 1/q\}$, 
			\begin{eqnarray*}
				\sum_{i=-n}^{n}|u(x_{i},t)-u(x_{i+1},t)|^{\frac{1}{s}}&=&\sum_{i=-n}^{-1}|u(x_{i},t)-u(x_{i+1},t)|^{\frac{1}{s}}+|u(x_{0},t)-u(x_{1},t)|^{\frac{1}{s}}\\
				&+&\sum_{i=1}^{m-1}|u(x_{i},t)-u(x_{i+1},t)|^{1/s}+|u(x_{m},t)-u(x_{m+1},t)|^{\frac{1}{s}}\\
				&+&\sum_{i=m+1}^{n-1}|u(x_{i},t)-u(x_{i+1},t)|^{\frac{1}{s}}.\\
			\end{eqnarray*}
			By using the explicit formula  from Theorem \ref{constant} we get,
			\begin{eqnarray}
				\sum_{i=-n}^{n}|u(x_{i},t)-u(x_{i+1},t)|^{1/s}&\le&\underbrace{\sum_{i=-n}^{-1}|u(x_{i},t)-u(x_{i+1},t)|^{\frac{1}{s}}}_\text{I}\nonumber\\
				&+&\underbrace{\sum_{i=1}^{m-1}|f^{-1}_{+}g(u_{0}(z_{+}(x_{i},t)))-f^{-1}_{+}g(u_{0}(z_{+}(x_{i+1},t)))|^{\frac{1}{s}}}_\text{II}\nonumber\\
				&+&\underbrace{\sum_{i=m+1}^{n-1}|u(x_{i},t)-u(x_{i+1},t)|^{\frac{1}{s}}}_\text{III}+ 2(2||u_{0}||_{\f})^{\frac{1}{s}}.\label{eq:terms}
			\end{eqnarray}
			Now we wish to estimate the terms \RN{1}, \RN{2}, and \RN{3}.  The simplest terms  \RN{1},  \RN{3} are estimated as in \cite{junca1}.  First taking the \RN{1} and \RN{3} into the account, since $f,g$ are satisfying the flux non-degeneracy condition \eqref{fluxc}, by Lemma \ref{lemma:Holder}, the maps $u\mapsto (g')^{-1}(u)$ and $u\mapsto (f')^{-1}(u)$ are H\"older continuous with exponents $1/q$ and $1/p$.
			From the Theorem \ref{constant},  for $x<0$, the solution of \eqref{1.1} is given by $u(x, t)=(g')^{-1}\left(\frac{x-z_{-}(x, t)}{t}\right)$. Then for $-M\leq x_i<x_{i+1}\leq 0$ we have
			\begin{eqnarray*}
				|u(x_{i},t)-u(x_{i+1},t)|^{q}&=&\left|(g')^{-1}\left(\frac{x_{i}-z_{-}(x_{i},t)}{t}\right)-(g')^{-1}\bigg(\frac{x_{i+1}-z_{-}(x_{i+1},t)}{t}\bigg) \right|^{q}\\
				&\le&\bigg(C_{2}^{-q^{-1}}\bigg|\frac{x_{i}-z_{-}(x_{i},t)}{t}-\frac{x_{i+1}-z_{-}(x_{i+1},t)}{t}\bigg|^{q^{-1}} \bigg)^{q},
			\end{eqnarray*}
			using triangle inequality we obtain,
			\begin{equation*}
				|u(x_{i},t)-u(x_{i+1},t)|^{q}\le C_{2}^{-1}\bigg|\frac{x_{i}-x_{i+1}}{t}\bigg|+C_{2}^{-1}\bigg|\frac{z_{-}(x_{i},t)-z_{-}(x_{i+1},t)}{t}\bigg|.
			\end{equation*}
			Since $|x_i|, |x_{i+1}|\le M$ and $x=z_{-}(x, t)+g'(u(x,0))t$ hence, we get the $TV^{q^{-1}}u(\sigma\cap[-M,0])$,
			%
			\begin{equation}\label{calc-1}
				TV^{q^{-1}}u(\sigma\cap[-M,0])\le \frac{4M}{C_{2} t}+\frac{1}{C_2}\sup\left\{\abs{g^{\p}(v)};\,\abs{v}\leq \norm{u_0}_{L^{\f}(\R)}\right\}.
			\end{equation}
			In similar fashion, for the term $\RN{3}$ we have, 
			\begin{equation}\label{estimation-III}
				TV^{p^{-1}}u(\sigma\cap[R_1(t),M])\le \frac{4M}{C_{1} t}+\frac{1}{C_1}\sup\left\{\abs{f^{\p}(v)};\,\abs{v}\leq \norm{u_0}_{L^{\f}(\R)}\right\}.
			\end{equation}
			Next we focus on computing term II. Without loss of generality we can choose a point $x'\in(0,R_{2}(t))$ to take  $x'= R_{2}(t)$ such that $z_{+}(x',t)<0$, suppose there is no such point $x'$, then $R_{2}(t)=0$, since $R_1(t)=R_2(t)=0$ which implies that the term $\RN{2}$ does not appear. 
			Let characteristics emanating from $z_{+}(x', t)$ be hitting the interface at $t_{+}(x', t)$ then by the monotonicity of $t_{+}(\cdot, t)$ we get for all $x\in(0,x')$,	
			\begin{equation}\label{t+}
				0< t_{+}(x', t)\le t_{+}(x, t)< t ,
			\end{equation} 
			also by the monotonicity of $z_{+}(\cdot, t)$ we get for all $x\in(0,x')$,
			\begin{equation}\label{zt}
				z_{+}(0+, t)\le z_{+}(x, t)\le z_{+}(x', t)<0,
			\end{equation}
			which implies that 
			\begin{equation}\label{z0}
				\frac{z_{+}(0+, t)}{t_{+}(x, t)}\le \frac{z_{+}(x, t)}{t_{+}(x, t)}\le \frac{z_{+}(x', t)}{t_{+}(x, t)}<0.
			\end{equation}
			Hence, from \eqref{t+} and \eqref{zt} we get
			\begin{eqnarray}\label{zt+}
				\frac{z_{+}(0+, t)}{t_{+}(x, t)}&\ge& \frac{z_{+}(0+, t)}{t_{+}(x', t)},\\\label{zt++}
				\frac{z_{+}(x', t)}{t}&\ge& \frac{z_{+}(x', t)}{t_{+}(x, t)}.
			\end{eqnarray}
			Therefore, from \eqref{z0}, \eqref{zt+} and \eqref{zt++} one can conclude that for $x\in(0,x')$,
			\begin{equation}\label{bounded}
				\frac{z_{+}(0+,t)}{t_{+}(x',t)}\le \frac{z_{+}(x,t)}{t_{+}(x,t)}\le \frac{z_{+}(x',t)}{t}<0.
			\end{equation}
			Since $(g')^{-1}$ is an increasing function and $(g')^{-1}(0)=\theta_{g}$, the above inequality \eqref{bounded} implies that, 
			\begin{equation}\label{lb:g-inv}
				(g')^{-1}\left(-\frac{z_{+}(x,t)}{t_{+}(x,t)}\right)\geq \de_0:=(g')^{-1}\left(-\frac{z_{+}(x',t)}{t}\right)>\theta_g\mbox{ for }x\in(0,x').
			\end{equation}
			Then, the map $a\mapsto g((g^{\p})^{-1}(a)$ is Lipschitz continuous for $a\geq g^{\p}(\de_0)=:c_0>0$. 
			As $f(\theta_{f})=g(\theta_{g})$ we have if $g((g^{\p})^{-1}(a))$ is 
			away from $g(\theta_{g})$ then also away from $f(\theta_{f})$ which implies that,
			\begin{equation}\label{func:fgg}
				a\mapsto f_+^{-1}g\left((g')^{-1}(a)\right)
			\end{equation} 
			is a H\"older continuous  function with exponent $1/q$ for $a\geq c_0>0$. 
			\begin{center}
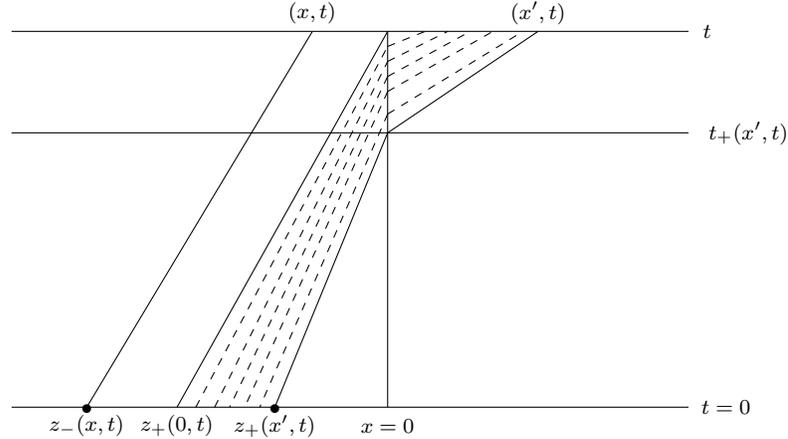

				\begin{tikzpicture}
					\draw (-5,0)--(4,0);
					\draw (0,0)--(0,5);
					\draw(-5,5)--(4,5);
					\draw (-5,3.65)--(4,3.65);
					\draw (2,5)--(0,3.65);
					\draw (0,3.65)--(-1.5,0);
					\draw [dashed](0,3.9)--(-1.7,0);\draw [dashed](0,3.9)--(1.8,5);
					\draw [dashed](0,4.2)--(-1.9,0);
					\draw [dashed](0,4.4)--(-2.1,0);\draw [dashed](0,4.2)--(1.4,5);
					\draw [dashed](-2.3,0)--(0, 4.6);\draw [dashed](0,4.4)--(1.1,5);
					\draw [dashed](-2.55,0)--(0, 4.8);\draw [dashed](0,4.6)--(.8,5);
					\draw (0,5)--(-2.8,0);\draw [dashed](0,4.8)--(.5,5);
					\draw(-1,5)--(-4,0);
					\draw (0,-.25)node{\scriptsize $x=0$};
					\draw (-1.5,-.02) node{\scriptsize $\bullet$};
					\draw (-2.8,-.25) node{\scriptsize$z_{+}(0,t)$};
					\draw (-1.5,-.25) node{\scriptsize$z_{+}(x',t)$};
					\draw (2,5.25) node{\scriptsize$(x',t)$};
					\draw (4.25,5) node{\scriptsize $t$};
					\draw (4.8, 3.65) node{\scriptsize $t_{+}(x',t)$};
					\draw (4.5,0) node{\scriptsize $t=0$};
					\draw (-4,-.02) node{\scriptsize $\bullet$};
					\draw (-4,-.25) node{\scriptsize$z_{-}(x,t)$};
					\draw (-1,5.25)node{\scriptsize$(x,t)$};
				\end{tikzpicture}
    \captionof{figure}{An illustration of solution for the region $(0, x')$ at time $t$.}
			\end{center}
			From Theorem \ref{constant} for all $x\in(0, x')$ we have
			\begin{equation}\label{g*}
				g(u_{0}(z_{+}(x, t)))=g(u(0-, t_{+}(x, t)))=(g')^{-1}\left(-\frac{z_{+}(0, t_{+}(x, t))}{t_{+}(x, t)}\right).
			\end{equation}
			To get a more precise estimate with respect to t, we refine the choice of $x^{\p}$. We wish to choose a point $x^\p\in(0,R_2(t))$ such that $t_+(x^\p+,t)\leq t/2\leq t_+(x^\p-,t)$. Since $t_+(x,t)\rr t$ as $x\rr 0+$, we can always get $x^\p$ satisfying $t_+(x^\p-,t)\geq t/2$. If we get an $x^\p$ such that $t_+(x^\p+,t)\leq t/2\leq t_+(x^\p-,t)$, we define $\sigma_{1}:=\sigma\cap [0, x')$, $\sigma_{2}:=\sigma\cap [x', R_{2}(t))$. In the other case, definition of $\si_1$ remains same and $\si_2$ is set to be empty.  Now from \eqref{g*} we get, \begin{eqnarray*}
				\RN{2}&=&\sum_{x_{i}\in\sigma_{1}}^{}\left| f_{+}^{-1}g\bigg((g')^{-1}\left(-\frac{z_{+}(x_{i},t)}{t_{+}(x_{i},t)}\right)\bigg)- f_{+}^{-1}g\bigg((g')^{-1}\left(-\frac{z_{+}(x_{i+1},t)}{t_{+}(x_{i+1},t)}\right)\bigg)\right|^{\frac{1}{s}}\\
				&+&\sum_{x_{i}\in\sigma_{2}}^{}\left|(f')^{-1}\left(\frac{x_{i}}{t-t_{+}(x_{i}, t)}\right)-(f')^{-1}\left(\frac{x_{i+1}}{t-t_{+}(x_{i+1},t)}\right)\right|^{\frac{1}{s}}\\
&\le&C(f,g,t,u_0)\cdot\sum_{x_{i}\in\sigma_{1}}^{}\bigg|-\frac{z_{+}(x_{i},t)}{t_{+}(x_{i},t)}+\frac{z_{+}(x_{i+1},t)}{t_{+}(x_{i+1},t)}\bigg|^{\frac{1}{sq}}\\
				&+&C_{1}\cdot\sum_{x_{i}\in\sigma_{2}}^{}\bigg|\frac{x_{i}}{t-t_{+}(x_{i},t)}-\frac{x_{i+1}}{t-t_{+}(x_{i+1},t)}\bigg|^{\frac{1}{sp}}.
			\end{eqnarray*}
			Note that $\abs{t-t_+(x,t)}\geq\abs{t-t_+(x'+,t)}\geq t/2$ for $x\in (x^{\p},R_1(t))$. Since  $s\, p<1$, we obtain
			\begin{align*}
				&\abs{\frac{x_i}{t-t_+(x_i,t)}-\frac{x_{i+1}}{t-t_+(x_{i+1},t)}}^{\frac{1}{sp}}\\
				&\leq \frac{2^{\frac{1}{sp}}\abs{x_i-x_{i+1}}^{\frac{1}{sp}}}{\abs{t-t_+(x_i,t)}^{\frac{1}{sp}}}+2^{\frac{1}{sp}}\abs{\frac{x_{i+1}}{t-t_+(x_i,t)}-\frac{x_{i+1}}{t-t_+(x_{i+1},t)}}^{\frac{1}{sp}}\\
				&\leq \frac{2^{\frac{1}{sp}}}{\abs{t-t_+(x^{\p},t)}^{\frac{1}{sp}}}\abs{x_i-x_{i+1}}^{\frac{1}{sp}}+\frac{(2M)^{\frac{1}{sp}}}{\abs{t-t_+(x',t)}^{\frac{2}{sp}}}\abs{t_+(x_i,t)-t_+(x_{i+1},t)}^{\frac{1}{sp}}\\
				&\leq \frac{2^{\frac{2}{sp}}}{t^{\frac{1}{sp}}}\abs{x_i-x_{i+1}}^{\frac{1}{sp}}+\frac{(8M)^{\frac{1}{sp}}}{t^{\frac{2}{sp}}}\abs{t_+(x_i,t)-t_+(x_{i+1},t)}^{\frac{1}{sp}}.
			\end{align*}
			By a similar argument, we have
			\begin{align}
				&\abs{\frac{z_{+}(x_{i},t)}{t_{+}(x_{i},t)}-\frac{z_{+}(x_{i+1},t)}{t_{+}(x_{i+1},t)}}\nonumber\\
				&\leq \frac{1}{t_{+}(x_{i},t)}\abs{z_{+}(x_{i},t)-z_{+}(x_{i+1},t)}+\frac{\abs{z_+(x_{i+1},t)}}{t_{+}(x_{i},t)t_+(x_{i+1},t)}\abs{t_{+}(x_i,t)-t_+(x_{i+1},t)}.\nonumber
			\end{align}
			Due to the choice of $x^\p$, $t_+(x,t)\geq t/2$ for $x\in(0,x^\p)$. Therefore,
			\begin{align}
				&\abs{\frac{z_{+}(x_{i},t)}{t_{+}(x_{i},t)}-\frac{z_{+}(x_{i+1},t)}{t_{+}(x_{i+1},t)}}\nonumber\\
				&\leq \frac{2}{t}\abs{z_{+}(x_{i},t)-z_{+}(x_{i+1},t)}+\frac{4\abs{z_+(x_{i+1},t)}}{t^2}\abs{t_{+}(x_i,t)-t_+(x_{i+1},t)}.\label{estimate:z+}
			\end{align}
			Hence, by the monotonicity of $x\mapsto t_{+}(x, t)$ and $x\mapsto z_{+}(x, t)$ we estimate,
			\begin{align}
				\RN{2}&\leq \frac{2^{\frac{1}{s}}C(f, g,t,u_0)}{t^{\frac{1}{sq}}}\abs{z_{+}(0+,t)-z_{+}(x^\p-,t)}^{1/sq}\nonumber\\
				&+\frac{4^{\frac{1}{sq}}\abs{z_+(0+,t)}^{\frac{1}{sq}}C(f, g,t,u_0)}{t^{\frac{2}{sq}}}\abs{t_{+}(0+,t)-t_+(x^\p,t)}^{\frac{1}{sq}}
				\nonumber\\
				&+\frac{2^{\frac{2}{sp}}C_1}{t^{\frac{1}{sp}}}\abs{x_1-x_{m+1}}^{\frac{1}{sp}}+\frac{(8M)^{\frac{1}{sp}}C_1}{t^{\frac{2}{sp}}}\abs{t_+(x_1,t)-t_+(x_{m+1},t)}^{\frac{1}{sp}}.
			\end{align}
			Note that $\abs{t_{+}(0+,t)-t_+(x^\p,t)}\leq t/2$ and $\abs{t_+(x_1,t)-t_+(x_{m+1},t)}\leq t/2$. Hence we get
			\begin{equation}\label{estimation-II}
				\RN{2}\le C(f, g,t,u_0,M)\frac{1}{t^{\frac{1}{sp}}}.
			\end{equation}
			Thus, combining \RN{1}, \RN{2} and \RN{3} we obtain,
			\begin{eqnarray}
				\sum_{-n}^{n}|u(x_{i},t)-u(x_{i+1},t)|^{\frac{1}{s}}&\le&C(f, g, t,u_0,M)+2(2||u_{0}||_{\f})^{\frac{1}{s}}. 
			\end{eqnarray}
			\descitem{Case (ii):}{case-2} $R_{1}(t)=0$, $L_{1}(t)<0$. This case follows by a similar argument as in \descref{case-1}{Case (i)}.
		\end{description}
		This proves Theorem \ref{linf}.
	\end{proof}
	Next, we prove Proposition \ref{prop:estimate} which concerns the precise estimate of $TV^s(u(\cdot,t))$ when initial data satisfies condition \eqref{condition:estimate}. This condition means when the initial data are far from critical points of fluxes.  In general, without this condition,  the situation is quite intricate and no estimates are proposed. 
		\begin{proof}[Proof of Proposition \ref{prop:estimate}:]	
			We prove an explicit estimate of term II as in \eqref{eq:terms} and note that for terms I and III we can use explicit estimates \eqref{calc-1}, \eqref{estimation-III}. To this end, it is enough to get an explicit estimate of $C(f,g, M,t,u_0)$ in \eqref{estimation-II}. We note that $C(f,g,M,t,u_0)$ comes from estimation of Lipschitz constant of $f^{-1}_+g$ which depends on domain $[a,m]$ of $f_+^{-1}g$ for $\theta_g<a<m<\f$. Observe that as $a\rr \theta_g$ the Lipschitz constant of $f_+^{-1}g$ blows up. We prove that when initial data is satisfying \eqref{condition:estimate}, we can estimate the effective domain of $f_+^{-1}g$ in terms of $\de,r,t$. Since $u_0$ satisfies \eqref{condition:estimate}, we can re-write \eqref{bounded} as 
			\begin{equation}
				\frac{z_{+}(0+,t)}{t_{+}(x',t)}\le \frac{z_{+}(x,t)}{t_{+}(x,t)}\le \frac{z_{+}(x',t)}{t}\leq \max\left\{\frac{-\de}{t},-g^{\p}(\theta_g+r)\right\}.
			\end{equation}
			We observe the following if $\varphi\in C(\R)$ is an increasing function satisfying $\abs{\varphi(a_1)-\varphi(a_2)}\leq C_1\abs{a_1-a_2}^\al$ with $\al\in(0,1]$ then we have $a_1\geq a_2+C_1^{-1/\al}\abs{\varphi(a_1)-\varphi(a_2)}^{1/\al}$ for $a_1\geq a_2$.  Since $g$ is a $C^2$ function we have 
			\begin{equation}
				(g')^{-1}\left(-\frac{z_{+}(x,t)}{t_{+}(x,t)}\right)\geq \de_0:=\theta_g+C_g\min\left\{{\de}/{t},r\right\}\mbox{ for }x\in(0,x').
			\end{equation}
			By Lemma \ref{lemma:g-condition}, $g_+$ satisfies the non-degeneracy condition \eqref{fluxc} with exponent $q+1$, therefore, we obtain for $a\geq \theta_g+C_g\min\left\{{\de}/{t},r\right\}$, 
			\begin{align*}
				g (g^{\p})^{-1}(a)&\geq g(\theta_g)+C_{g,q}\min\left\{{\de^{q+1}}/{t^{q+1}},r^{q+1}\right\}\\
				&=f(\theta_f)+C_{g,q}\min\left\{{\de^{q+1}}/{t^{q+1}},r^{q+1}\right\}.
			\end{align*}
			Since $f\in C^1(\R)$ we get
			\begin{equation*}
				f_+^{-1}(g (g^{\p})^{-1}(a))\geq \theta_f+C_{f,g,q}\min\left\{{\de^{q+1}}/{t^{q+1}},r^{q+1}\right\},
			\end{equation*}
			for $a\geq \theta_g+C_g\min\left\{{\de}/{t},r\right\}$.
			Let $h$ be defined as $h=f_+^{-1}\circ g $. Then, $h^\p(b)=\frac{g^{\p}(b)}{f^{\p}(f_+^{-1}(g(b)))}$. Since $f$ satisfies \eqref{fluxc}, $\abs{h^\p(b)}\leq C_{g,q,m}\la^{-p}$ for $b$ satisfying $\abs{b}\leq m$ and $f_+^{-1}g(b)\geq \theta_f+\la$. 
			Hence, we have for $a,b\geq \min\left\{(g^\p)^{-1}(\de/t),\theta_g+r\right\}$,
			\begin{equation*}
				\abs{f_+^{-1} g(g^{\p})^{-1}(a)-f_+^{-1} g(g^{\p})^{-1}(b)}\leq C_{f,g}\max\left\{{t^{pq+p}}/{\de^{pq+p}},{r^{-pq-p}}\right\}\abs{a-b}^{\frac{1}{q}}.
			\end{equation*}
			As $1/s\geq \max\{p,q\}$ we get $a^{pq+p}\leq a^{pq+\frac{1}{s}}$ for $a\geq1$. Applying the above observations in estimate \eqref{estimation-II} of term II we obtain \eqref{estimate-precise}.
		\end{proof}
	\subsection{Propagation of the regularity  and smoothing for compatible fluxes}
	Now Theorem \ref{bvs1} is proved and then its Corollary  \ref{bvs}.  
 Many arguments given in the previous subsection are used and shortly recalled in the following proof to prove that the initial fractional regularity is propagated as for a smooth flux. 
	\begin{proof}[Proof of Theorem \ref{bvs1}]
		Due to the assumption $f(\theta_{f})=g(\theta_{g})$, it is enough to consider the following two cases: (i) $L_1(t)=0, R_1(t)\geq0$, (ii) $R_1(t)=0, L_1(t)\leq0$. Likewise Theorem \ref{bvs}, we give detailed proof only for case (i).
		\begin{description}
			\descitem{Case (i):}{thm-bvs1-case-1} If $L_{1}(t)=0$, $R_{1}(t)\ge 0$.\\
			Consider the partition $\{\cdots<x_{-1}<x_{0}\le0=L_{2}(t)=L_{1}(t)<x_{1}<\cdots<x_{m}\le R_{2}(t)=R_{1}(t)<x_{m+1}<\cdots \}$ and fix $s\in(0,1)$,
			\begin{eqnarray*}
				\sum_{i=-\f}^{\f}|u(x_{i},t)-u(x_{i+1},t)|^{1/s}&=&\sum_{i=-\f}^{-1}|u(x_{i},t)-u(x_{i+1},t)|^{1/s}+|u(x_{0},t)-u(x_{1},t)|^{1/s}\\&+&\sum_{i=1}^{m-1}|u(x_{i},t)-u(x_{i+1},t)|^{1/s}+|u(x_{m},t)-u(x_{m+1},t)|^{1/s}\\&+&\sum_{i=m+1}^{\f}|u(x_{i},t)-u(x_{i+1},t)|^{1/s}.
			\end{eqnarray*}
			From the Theorem \ref{constant} we get,
			\begin{eqnarray*}
				\sum_{i=-\f}^{\f}|u(x_{i},t)-u(x_{i+1},t)|^{1/s}&\le&\underbrace{\sum_{i=-\f}^{-1}|u_{0}(z_{-}(x_{i},t))-u_{0}(z_{-}(x_{i+1},t))|^{1/s}}_\text{I}+2(2||u_{0}||_{\f})^{1/s}\\
				&+&\underbrace{\sum_{i=1}^{m-1}|f^{-1}_{+}g(u_{0}(z_{+}(x_{i},t)))-f^{-1}_{+}g(u_{0}(z_{+}(x_{i+1},t)))|^{1/s}}_\text{II}\\
				&+&\underbrace{\sum_{i=m+1}^{\f}|u_{0}(z_{+}(x_{i},t))-u_{0}(z_{+}(x_{i+1},t))|^{1/s}}_\text{III}.\\
			\end{eqnarray*}
			Since the initial data $u_{0}\in BV^{s}(\R)$, hence,  \RN{1} and \RN{3} 
			\begin{equation*}
				\RN{1} \tcb{+} \RN{3}\le TV^{s}(u_{0}).
			\end{equation*}
			Hence, from \eqref{z0}, \eqref{zt+} and \eqref{zt++} one can conclude that for $x\in(0, x')$,
			\begin{equation*}
				\frac{z_{+}(0,t)}{t_{+}(x, t)}\le \frac{z_{+}(x, t)}{t_{+}(x, t)}\le \frac{z_{+}(x',t)}{t}.
			\end{equation*}
			$g$ is a convex function which implies that $(g')^{-1}$ is an increasing function and $(g')^{-1}(0)=\theta_{g}$ hence, $(g')^{-1}\Big(-\frac{z_{+}(x,t)}{t_{+}(x,t)}\Big)$, for $x\in(0,x')$, is  always away from $\theta_{g}$. 
			%
			Therefore, similarly as in the proof of Theorem \ref{linf} we get
			\begin{equation*}
				\RN{2}=\sum_{i=1}^{l-1}|f^{-1}_{+}g(u_{0}(z_{+}(x_{i},t)))-f^{-1}_{+}g(u_{0}(z_{+}(x_{i+1},t)))|^{1/s}\le  C(f, g, t,u_0).
			\end{equation*}
			Hence,
			\begin{equation*}
				\sum_{i=-\f}^{\f}|u(x_{i},t)-u(x_{i+1},t)|^{1/s}\le 2TV^{s}(u_{0})+C(f, g, t)+2(2||u_{0}||_{\f})^{1/s}.
			\end{equation*}
			\descitem{Case (ii):}{thm-bvs1-case-2} $R_{1}(t)=0$, $L_{1}(t)<0$.
			
			This case follows by a similar argument as in \descref{thm-bvs1-case-1}{Case (i)}. 
			Hence, the theorem.
		\end{description}
	\end{proof}
  Now the last main result is proven. It gives the regularity of the solution due to the fractional regularity of the initial data and the smoothing effect. Indeed Corollary  \ref{bvs} is a  direct consequence of Theorem \ref{bvs1} and \ref{linf}.
	\begin{proof}[Proof of Corollary \ref{bvs}]
		We assume $f(\theta_{f})=g(\theta_{g})$ and fluxes satisfies the non-degeneracy condition \eqref{fluxc}.
		Hence, Theorem \ref{linf} and Theorem \ref{bvs1} together imply that the solution is in fractional BV space with exponent $s_{1}=\min\{\max\{s, r_{1}\},$$ \max\{s, r_{2}\}\}$.
	\end{proof}
 \begin{remark}
			With appropriate sufficient conditions, we can obtain estimates of type \eqref{estimate-precise} for Theorem \ref{bvs1} and Corollary \ref{bvs} as well. 
	\end{remark}
	\section{Acknowledgement}
    The authors thank the IFCAM project: "Conservation laws: $BV^s$, interface, and control". The second author would like to thank  TIFR-CAM for hospitality.

\end{document}